\newtheorem{theorem}{Theorem}[section]
\newtheorem{lemma}[theorem]{Lemma}
\newtheorem{property}[theorem]{Property}
\newtheorem{definition}[theorem]{Definition}
\newtheorem{corollary}[theorem]{Corollary}
\begin{document}
	\renewcommand{\qed}{\null\hfill\IEEEQEDclosed}
 	\bstctlcite{MyBSTcontrol}
	\title{A Graphical Measure of Aggregate Flexibility\\ for Energy-Constrained Distributed Resources}%
	\author{Michael P. Evans, Simon H. Tindemans, \IEEEmembership{Member, IEEE} and David Angeli, \IEEEmembership{Fellow, IEEE}
	\thanks{M.P. Evans and D. Angeli are with the Department of Electrical and Electronic Engineering, Imperial College London, UK (email: m.evans16@imperial.ac.uk; d.angeli@imperial.ac.uk). D. Angeli is in addition with the Department of Information Engineering, University of Florence, Italy. S.H. Tindemans is with the Department of Electrical Sustainable Energy, TU Delft, Netherlands (email: s.h.tindemans@tudelft.nl). This work was supported by EPSRC studentship 1688672.}}
	\date{}
	
	\markboth{Submitted to Transactions on Smart Grid}{}
	\maketitle
		
	\begin{abstract}
		We consider the problem of dispatching a fleet of heterogeneous energy storage units to provide grid support. Under the restriction that recharging is not possible during the time frame of interest, we develop an aggregate measure of fleet flexibility with an intuitive graphical interpretation. This analytical expression summarises the full set of demand traces that the fleet can satisfy, and can be used for immediate and straightforward determination of the feasibility of any service request. This representation therefore facilitates a wide range of capability assessments, such as flexibility comparisons between fleets or the determination of a fleet's ability to deliver ancillary services. Examples are shown of applications to fleet flexibility comparisons, signal feasibility assessment and the optimisation of ancillary service provision. 
	\end{abstract}

	\begin{IEEEkeywords}
	Distributed resources, energy storage systems, optimal control, aggregation, ancillary service
	\end{IEEEkeywords}
	
	\printnomenclature[1.8cm]
	
	\section{Introduction}
	\label{sec:intro}
	\subsection{Background}
	We consider the problem of operating a fleet of storage devices, i.e. a collection of these which are dispatched in a coordinated manner, in order to provide system support in supply-shortfall conditions. Recent years have seen the proliferation of multiple forms of storage onto electricity grids and, crucially, many of these resources are energy-constrained, for example due to physical capacity limits or operational limits set by users. These storage technologies can provide a range of valuable services to the system, with one advantage being their ability to shift consumption in time, thereby compensating for fluctuations in the output of intermittent generation. This offers significant potential as a means to replace conventional generation as electricity networks are decarbonised, according to the World Energy Council \cite{WEC}. Increasingly, system operators are offering frameworks for convenient delivery of grid-support services; to date these include the California Independent System Operator (CAISO) \cite{CAISO} and the Pennsylvania-New Jersey-Maryland Interconnection RTO (PJM) \cite{PJM} in the US. The latter offers the control framework within which we base our research: a system-wide regulation signal, representing the mismatch between electricity supply and demand, is broadcast and service participants endeavour to track this signal. Thus distributed resources are controlled centrally via their aggregate response. For the purposes of description we assume that an aggregator, defined as the entity responsible for the provision of system services through dispatch of the fleet, is contracted by the network operator as follows. The amount of power requested is updated at regular intervals, and the aggregator must make a decision as to which of its resources to deploy to meet the request. This aggregation of small participants into composite entities occurs in many ancillary service markets with minimum size requirements. Example fleets include uninterruptible power supplies with storage headroom and electric vehicle (EV) batteries returned to the manufacturer for recycling; grid support offers a convenient end-of-life deployment for the latter once they have deteriorated beyond usability in the EV. Note that if significant loss of load were at stake the network operator might instead take on the role of central dispatcher.
	
	We focus on the decision making of the aggregator, and consider how to best dispatch devices in the presence of uncertain demand. Previous literature has addressed the coordinated dispatch of a fleet of distributed devices in comparable problem frameworks. Prior approaches can be broadly clustered into three categories: optimal control, mean-field control \cite{Tindemans2015,Busic2016,Chertkov2018} and transactive energy \cite{Chen2017,Nunna2017,Hu2017,Liu2018}. Our approach lies in the first category and we directly optimise power flows between storage and the grid via explicit feedback policies. In contrast to mean field approaches, this allows us to achieve guaranteed optimal behaviour, as opposed to in expectation. 
	
	Within applications of optimal control to the dispatch of distributed devices, a common approach has been the composition of an optimisation problem of standard form for the entire network and across the full time horizon. The dispatch of each device at each time instant is then assigned as a decision variable in the optimisation problem. Objectives chosen have included minimisation of charging costs \cite{Alharbi2017} or more generalised operational costs \cite{Alharbi2017,Paterakis2016,Alharbi2015,Battistelli2012,Bai2015,Saber2012}, or flattening of load profiles \cite{Alonso2014}. Varying based upon the problem framework, solution techniques applied to these problems have included linear solvers \cite{Paterakis2016}, mixed-integer linear solvers \cite{Alharbi2015}, combined mixed-integer linear/nonlinear solvers \cite{Alharbi2017}, robust optimisation \cite{Battistelli2012,Bai2015}, particle swarm algorithms \cite{Saber2012} and genetic algorithms \cite{Alonso2014}. 
	
	We are interested in achieving optimality in a simpler way, negating the need to perform the large computations of this prior work. Moreover, we are interested in applying our techniques to settings with a complete lack of knowledge about future request signals, including probability distributions or forecasts. In these cases one would be unable to compose an optimisation problem covering all future time instants. For example, \cite{Gan2013} composed a simpler optimisation problem and presented a decentralised solution, but still required a foreknown demand profile.
	
	Specifically, we consider approaches where the optimal dispatch problem is divided into two coupled sub-problems. A real-time \textit{control} algorithm dispatches the fleet of devices according to a common control signal, without knowledge of the future (i.e. a greedy algorithm). This control algorithm is paired with a matching \textit{scheduling} problem that determines the capability of the fleet to meet future requests. It uses an aggregate representation of the flexibility limits of the device fleet to ensure that only feasible responses are scheduled. An example of this approach, applied to mean field control of thermostatically controlled loads, can be found in \cite{Tindemans2015} and \cite{Trovato2015}, for the control and scheduling problems respectively.
	
	A number of previous proposals to control single storage devices can be interpreted through the lens of a greedy control strategy plus a separate scheduling component. \cite{Bejan2012,Gast2014,Cruise2014} all implemented a greedy buffer policy in which the device charges as quickly as possible under conditions of excess supply and discharges as quickly as possible under excess demand, but chose different reference levels for defining this mismatch. In \cite{Bejan2012}, thermal plant was dispatched so that it was forecast to maintain the storage unit at a predefined set level, and buffering was done in real-time to counter forecast errors. In \cite{Gast2014} the buffer comparison level was instead the forecast net demand plus either a fixed offset or an offset based upon the forecast storage level. Cruise~\emph{et al.}\cite{Cruise2014} were concerned with arbitrage applications, where the authors showed that the choice to maximally charge or discharge should be based upon the notional value per unit of energy stored as follows: charge if this is above the buy price, discharge if this is below the sell price, otherwise do nothing. 
	
	Results with strict optimality proofs can be obtained by restricting the operational regime to discharging only. This corresponds to an assumption that the ability to satisfy the power request takes precedence over other objectives, allowing one to study properties of the system in a general sense, without considering price dynamics in detail. A natural objective for the aggregator within this setting is to postpone the time to failure, i.e. the time at which the aggregator is first unable to meet the requested power supply. In general, this approach is applicable whenever an event occurs which threatens security of supply, because it would give the network operator the most time in which to attempt recovery of normal operating conditions. A straightforward example of such an event would be the loss of a generator or transformer, or an islanding fault. Other relevant events might occur within nominally ordinary conditions, for example an unexpected shortfall in wind output or peak in demand. In these cases, purely surviving until the natural termination of the event would be sufficient to recover normal operation. In \cite{Edwards2017}, the authors showed that, for a single energy storage device, the naive greedy discharge policy maximises inclusion in the set of future request signals that the device is able to meet. This policy therefore maximises time to failure. The corresponding scheduling problem is trivial for a single device, because the feasibility of a specific output signal is immediately determined from the power and energy limits of the device.
	
	In \cite{Evans2017}, we proposed a greedy control policy that is optimal in this same sense for a fleet of arbitrary size. A feedback policy (which we will here refer to as the \textit{optimal} policy) was presented and shown to maximise the time to failure, instantaneous maximum power and flexibility of a system supplied by energy-constrained distributed resources. The flexibility maximisation, in particular, was performed in a set-theoretic sense; which is to say that the set of future feasible scenarios is always as large as possible, regardless of the current fleet output. This can be interpreted to mean that, with no information about future signals, implementation of the policy is guaranteed to result in the latest failure time of the system. Moreover, by forming a general integrator model of storage, we are able to apply our analysis across a wide range of device types that have previously been considered within the paradigm of demand response \cite{Callaway2010}. These include EVs \cite{Shao2012,Wenzel2017}, home storage devices \cite{Tsui2012,Wang2013} and diesel generators \cite{Mazidi2014,Wang2015}. It should be noted that a comparable policy to that which we propose, applied to a continuum of devices, is also relevant in price-determined settings \cite{DePaola2017} and enables the coordination of devices within areas of flat prices.

	\subsection{Contribution of this paper}
	We argue that the policy of \cite{Evans2017} solves, in an optimal sense, the \emph{control problem} for a fleet of energy-constrained distributed resources. While this policy can be used to determine fleet capability in a procedural way, i.e. by running the algorithm, this paper proposes an analytical method for undertaking the same assessment We present a transform that returns an aggregate representation of the fleet; this therefore enables longer-timescale planning and so can be used to solve the associated \emph{scheduling problem}. Moreover, this transform can also be applied to a received request for immediate feasibility determination, which leads to greater insight into fleet capabilities. The contributions of this paper can be summarised as follows:
	
	\begin{itemize}
		\item We present a functional means to determine request feasibility, thereby improving upon the procedural application of the optimal policy of \cite{Evans2017}.
		\item We explore the range of capability assessments that the transform enables: specification of maximum system service provision; immediate feasibility determination for a received request and flexibility comparison between different fleets.
		\item We discuss how this transform has multiple useful properties that enable intuitive fleet characterisation.
		\item We show how this transform can be implemented graphically, thereby enabling feasibility determination by eye. 
		\item We show how a graphical comparison can be made to the homogeneous fleet of the same total ratings.
		\item We derive additional analytical results relating to feasibility of requests, which lead to a greater understanding of the problem in general.
	\end{itemize}
	
	In deriving the transform, we utilise the result of \cite{Evans2017} that there is a one-to-one mapping between feasible signals and those that will be met by the optimal policy. Use of the developed framework can be seen in Figure~\ref{fig:flow1}. 
			
	\begin{figure}[h]
		\includegraphics[width=\columnwidth]{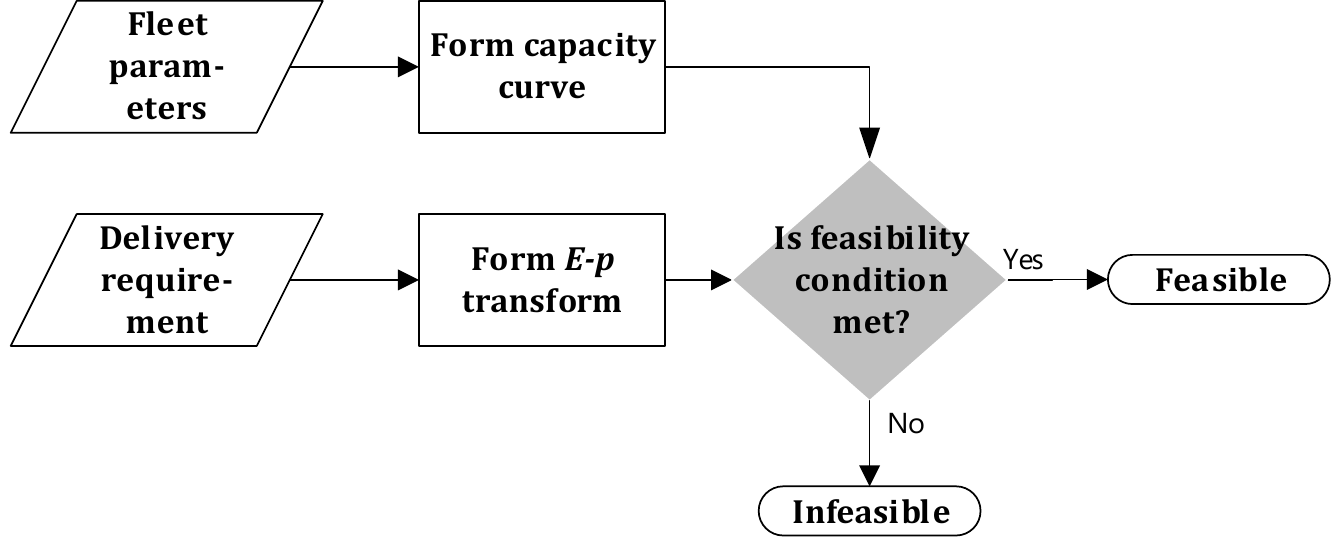}
		\caption{Use of the presented framework to determine the feasibility of a given delivery requirement.}
		\label{fig:flow1}
	\end{figure}
	
	\vspace{-3mm}
	\subsection{Motivating example}
	\label{sec:motivating example}
	The technique that we present is able to compare fleets of devices utilised for system services, when often such a comparison appears far from trivial. To this end, the contribution of this paper is perhaps best motivated by means of an example. Consider the microgrid of Figure~\ref{fig:microgrid}, onto which the system operator plans to connect a storage setup with one of the following three device configurations (energy, power): A, consisting of (108~kWh, 4~kW) and (36~kWh, 18~kW); B, consisting of (104~kWh, 13~kW) and C, consisting of (90~kWh, 8~kW) and (54~kWh, 14~kW). Note that configurations A and C have the same total energy and power but distributed differently between devices. Configuration B has smaller values of each but is comprised of a single device. Hence, prior to the analysis that we present, it is unclear which configuration would be the best choice.
	
	\begin{figure}[h]
		\centering
		\includegraphics[width=0.4\columnwidth]{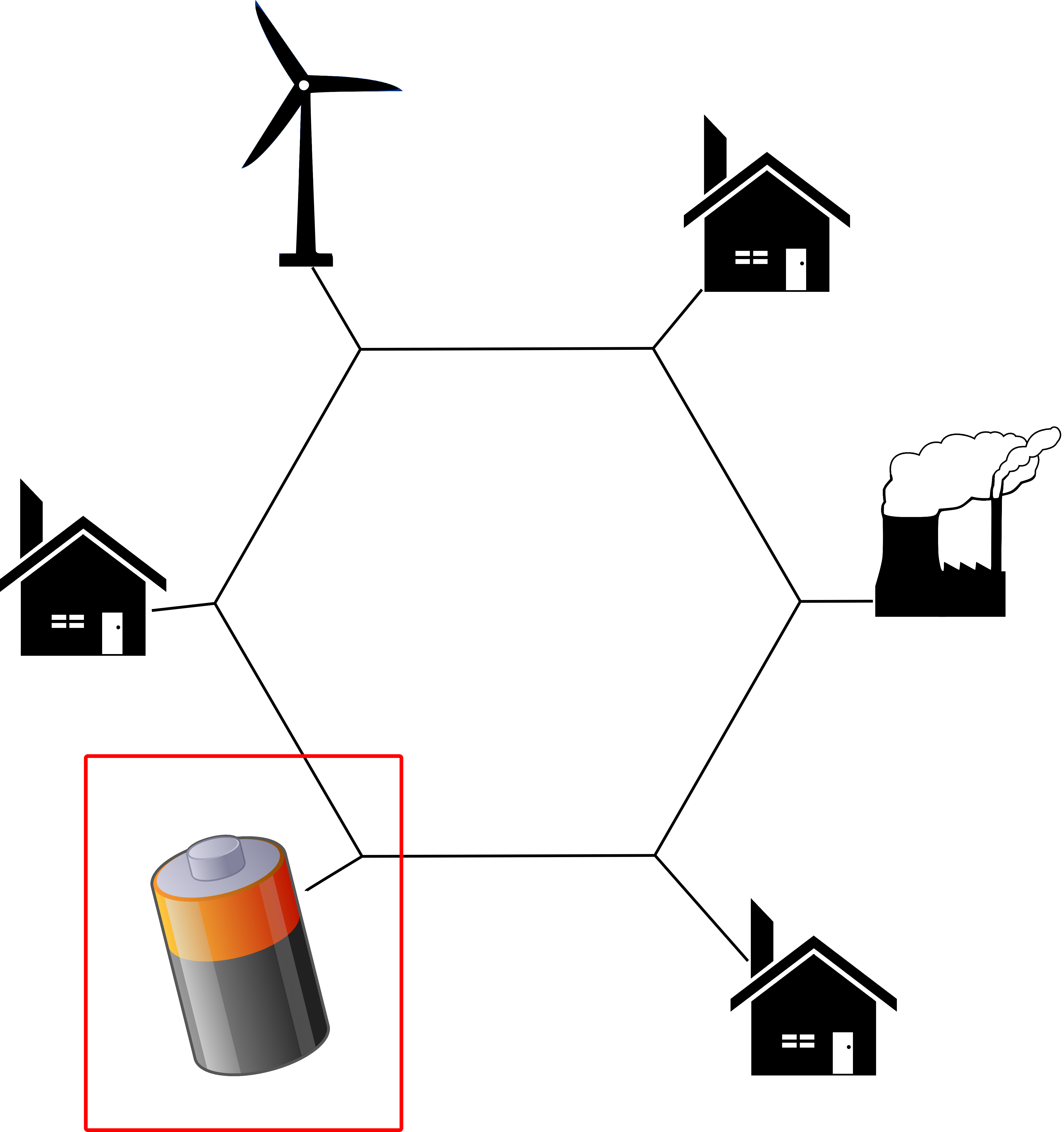}
		\caption{An example microgrid, onto which the network operator plans to connect storage at the location highlighted by the red box.}
		\label{fig:microgrid}
	\end{figure}
	
	\vspace{-3mm}
	\subsection{Organisation of the paper}
	The remainder of this paper is organised as follows. Section~\ref{sec:maths formulation} covers the mathematical formulation of the problem. This is followed by a presentation of analytical results in Section~\ref{sec:additional results}, and the application of these results to numerical examples in Section~\ref{sec:numerical results}. Section~\ref{sec:disc} then discusses the implications of these results. Finally, Section~\ref{sec: conclusion} concludes the report and discusses relevant future work. 
	
	\section{Mathematical Formulation}
	\label{sec:maths formulation}
	\subsection{Problem description}
	We denote by $n$ the number of energy-constrained storage units available to the aggregator, and define the set of all devices as $\mathcal{N}\doteq\{D_1,D_2,...,D_n\}$. We do not impose any restrictions on homogeneity of devices and allow each device to have a unique discharging efficiency. For convenience we incorporate this into the model implicitly by considering the extractable energy of each device, $e_i(t)$. We choose the power delivered by each device to be the control input $u_i(t)$, and assume that this is measured externally so that efficiency is once again accounted for. This leads to integrator dynamics on the energy of each device,
	\begin{equation}
	\dot{e}_i(t)=-u_i(t),
	\end{equation}
	subject to the assumed physical constraint $e_i(t)\geq0$. We neglect network effects, in particular due to the high likelihood that storage devices are situated near to electricity-consuming devices, so that their deployment is unlikely to be constrained by network congestion or losses. We also assume the absence of cross-charging between devices, which corresponds to a regime of energy scarcity. We therefore restrict our devices to discharging operation only, so that the power of each device is constrained as $u_i(t)\in[0,\bar{p}_i]$, in which $\bar{p}_i$ denotes the maximum discharge rate of device $D_i$, and with the  convention that discharging rates are positive. Note, once again, that there is no homogeneity imposed on energy or power constraints across the ensemble of devices. We define the \textit{time-to-go} of device $D_i$ to be the remaining time for which it can run at its maximum power, i.e.
	\begin{equation}
	x_i(t) \doteq \frac{e_i(t)}{\bar{p}_i},
	\end{equation}	
	and represent the state of each device by its time-to-go. We then form state, input and maximum power vectors as
	\begin{alignat}{3}
	x(t)&\doteq \begin{bmatrix}
	x_1(t)\ \dots\ x_n(t)
	\end{bmatrix}^T,\\
	u(t) &\doteq \begin{bmatrix}
	u_1(t)\ \dots\ u_n(t)
	\end{bmatrix}^T,\\
	\bar{p}&\doteq\begin{bmatrix}
	\bar{p}_1\ \dots\	\bar{p}_n
	\end{bmatrix}^T
	\end{alignat}
	respectively, so that we can write our dynamics in matrix form as $\dot{x}(t)=-P^{-1}u(t)$, in which $P \doteq \text{diag}(\bar{p})$. We define the state space as $\mathcal{X}\doteq [0,+\infty)^n$, and form the product set of our constraints on all the inputs,
	\begin{equation}
	\mathcal{U}_{\bar{p}} \doteq [0,\bar{p}_1] \times [0,\bar{p}_2] \times ... \times [0,\bar{p}_n],
	\end{equation}	
	allowing us to write our input constraints as $u(t)\in\mathcal{U}_{\bar{p}}$.
	
	Now, as in \cite{Evans2017}, we partition the devices considered at a chosen time instant into collections of descending equal state value, without loss of generality leading to
	\begin{equation}
	x_1=...=x_{s_1} >... > x_{s_{q-1}+1}=...=x_{s_q},
	\end{equation}
	in which $q$ is the number of unique $x_i$-values. Note that we have dropped the explicit dependence on $t$ as we are considering a single time instant. We then denote the subsets formed as $\mathcal{N}_i\doteq\{D_j\colon x_j=x_{s_i}\},\ i=1,2,...,q$, and similarly denote by $x_{\mathcal{N}_i}\doteq x_{s_i},\ i=1,2,...,q$ the (single) time-to-go value of the devices in each subset. We finally form a condensed state vector of distinct values only as
	\begin{equation}
	X\doteq\begin{bmatrix}
	x_{\mathcal{N}_1}\ \dots\ x_{\mathcal{N}_{q}}
	\end{bmatrix}^T,
	\end{equation}
	and denote the maximum power vectors corresponding to each subset as
	\begin{equation}
	\bar{U}_i\doteq\begin{bmatrix}
	\bar{p}_{s_{i-1}+1}\ ...\ \bar{p}_{s_i}
	\end{bmatrix}^T,\ \ i=1,...,q,
	\end{equation}
	with the convention that $s_0=0$.
	
	We denote by $P^r\colon [0,+\infty) \mapsto[0,+\infty)$ a power reference signal received by the aggregator, and in addition denote a truncation of such a signal as
	\begin{equation}
	P^r_{[t_0,t)}\doteq\left\{
	\begin{array}{@{}ll@{}}
	P^r(\tau), & \text{if}\ \tau\in[t_0,t) \\
	0, & \text{otherwise}.
	\end{array}\right.
	\end{equation} 
	We utilise equivalent notation for the truncation of any other signal also. 
	For a reference to be \textit{feasible}, there must exist a control signal able to satisfy it for all time without violating any constraints. These are both the power constraints on the control signal itself and the energy constraints that apply to the resulting state trajectory.  We therefore define the set of feasible reference signals as follows:
	\begin{definition}
		\label{def:feasible}
		The set of feasible power reference signals, for a system with maximum power vector $\bar{p}$ and initial state $x=x(0)$, is defined as
		\begin{equation*}
		\begin{aligned}
		\mathcal{F}_{\bar{p},x} \doteq \big\{&P^r(\cdot) \colon \exists u(\cdot),\ z(\cdot)\colon \forall t\geq 0,\ 1^Tu(t)=P^r(t),\\
		&u(t)\in \mathcal{U}_{\bar{p}},\ \dot{z}(t)=-P^{-1}u(t),\ z(0)=x,\ z(t) \geq 0 \big\}.
		\end{aligned}
		\end{equation*}
	\end{definition}
	
	\noindent We interpret the inclusion of a given feasible set by another as an increase in \emph{flexibility}. 

		\subsection{Optimal Feedback Policy}
		Due to its maximisation of the feasible set, we will continually refer to the optimal policy of \cite{Evans2017}. We therefore reproduce it here as follows, in which it should be noted that explicit time dependencies have been dropped as we are again considering instantaneous values. Without loss of generality, form subsets of devices with equal value in descending order, of which there will be $q$, so that $x_{\mathcal{N}_1}>x_{\mathcal{N}_2}>...>x_{\mathcal{N}_q}$. The explicit feedback law is then calculated as a fraction $r_i$ of the maximum power $\bar{U}_i$ according to:
		\begin{subequations}
			\begin{gather}
			r_i=\left\{
			\begin{array}{@{}ll@{}}
			1, & \text{if}\ \sum_{j\leq i}1^T\bar{U}_j\leq P^r \\
			0, & \text{if}\ \sum_{j< i}1^T\bar{U}_j\geq P^r \\
			\frac{P^r-\sum_{j< i}1^T\bar{U}_j}{1^T\bar{U}_i}, & \text{otherwise},
			\end{array}\right.\\
			u^*(x,P^r)\doteq\begin{bmatrix}
			r_1\bar{U}_1^T\ \dots\ r_q\bar{U}_q^T
			\end{bmatrix}^T.
			\end{gather}
			\label{eq: explicit policy}
		\end{subequations}
		\hspace{-3mm} In words, this policy allocates devices in descending order of time-to-go, at maximum power (with up to one subset of devices run at a fraction of maximum power to exactly meet the reference). This then depletes as few devices as possible, thereby maximising the number of available devices and so giving the fleet the best chance to meet an unknown future reference.
		
		Denoting by $z^*(\cdot)$ the state trajectory under the application of \eqref{eq: explicit policy}, the closed-loop dynamics are
		\begin{equation}
		\label{eq:closed-loop dynamics}
		\dot{z}^*(t)=-P^{-1}u^*\big(z^*(t),P^r(t)\big) .
		\end{equation}
		
	\section{Results on Feasibility}
	\label{sec:additional results}
	In this section we present theoretical results which hold in general. These results are of greater importance to our arguments than their derivations and so, for clarity of argument, their proofs have been omitted from the main body of this paper and can be found in the \nameref{sec:appendix}.

	\subsection{Interesting system properties}
	The existence of a feasible set which includes all others (maintained via implementation of the optimal policy \eqref{eq: explicit policy} - see \cite{Evans2017} for more details) allows us to derive additional useful properties of the system considered. Moreover, the optimality of the presented policy allows us to utilise it as a proxy for this largest feasible set. We undertake such derivations here, implicitly utilising this policy in each case. The proofs of Lemmas~\ref{lem:commutable refs} and \ref{lem:terminal state} can be found in the \nameref{sec:appendix}.
	\begin{lemma}
		Piecewise constant reference signals are permutable whilst maintaining feasibility.
		\label{lem:commutable refs}
	\end{lemma}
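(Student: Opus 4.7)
The plan is to work directly from Definition~\ref{def:feasible} and bypass the optimal policy~\eqref{eq: explicit policy} altogether: given a feasible control $u(\cdot)$ for $P^r$, I would construct a feasible control for any permutation $\tilde{P}^r$ of the pieces by pulling $u(\cdot)$ back through a piecewise-translation bijection of the time axis.

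The first step is a simplification of the feasibility condition. Since $u(t)\in\mathcal{U}_{\bar{p}}$ enforces $u_i(t)\ge 0$ componentwise, the dynamics $\dot{z}=-P^{-1}u$ make each $z_i(t)$ non-increasing. Letting $T$ denote the total duration of the (finitely many) non-zero pieces of $P^r$ and taking $P^r\equiv 0$ beyond $T$, the pointwise requirement $z(t)\ge 0$ for all $t\ge 0$ collapses to the single endpoint condition $z(T)\ge 0$. This decoupling turns pointwise reference tracking into a question about the cumulative energy drawn from each device.

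The second step is the reshuffling. Given a permutation $\pi$ of the pieces, I would build $\phi\colon[0,T]\to[0,T]$ as the piecewise affine, measure-preserving bijection that sends the $j$-th interval of $\tilde{P}^r$ to the $\pi(j)$-th interval of $P^r$ by a translation, so that $P^r\circ\phi=\tilde{P}^r$ by construction. Defining $\tilde{u}(t)\doteq u(\phi(t))$ on $[0,T]$ and $\tilde{u}(t)\doteq 0$ afterwards yields a candidate control with $\tilde{u}(t)\in\mathcal{U}_{\bar{p}}$, $1^T\tilde{u}(t)=P^r(\phi(t))=\tilde{P}^r(t)$, and, by the measure-preservation of $\phi$, $\int_0^T\tilde{u}(s)\,ds=\int_0^T u(s)\,ds$. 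The endpoint state of the corresponding trajectory $\tilde{z}(\cdot)$ is therefore $\tilde{z}(T)=z(T)\ge 0$, which by the first step certifies $\tilde{z}(t)\ge 0$ for all $t$, so $\tilde{P}^r\in\mathcal{F}_{\bar{p},x}$. Applying the same construction with $\pi^{-1}$ gives the converse, and feasibility is invariant under permutation.

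I do not anticipate a serious obstacle; the crucial observation, and what keeps the argument short, is that pure-discharge dynamics ($u\ge 0$) collapse feasibility to an endpoint condition on cumulative device energies, quantities that are manifestly invariant under reshuffling of the time axis. The argument would genuinely break if charging were allowed ($u_i<0$), since $z(\cdot)$ would cease to be monotone and intermediate positivity could then depend on the ordering of the pieces.
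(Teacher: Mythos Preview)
Your proposal is correct and follows essentially the same route as the paper: both reduce feasibility to the endpoint condition $z(T)\ge0$ via the discharge-only monotonicity, then permute the feasible input $u(\cdot)$ along with the reference and observe that the terminal state is unchanged because the time integral of $u$ is permutation-invariant. Your use of the measure-preserving bijection $\phi$ is just a slightly more abstract packaging of the paper's explicit concatenation-and-reorder construction, and the converse via $\pi^{-1}$ matches the paper's appeal to symmetry.
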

	\newcommand{\proofCommutableRefs}{
		Given a feasible reference signal $P^r(\cdot)$, consider the class of modified signals defined by:
		\begin{enumerate}
			\item An arbitrary partition of $[t_1,T)$,
		\begin{equation}
		P^r(\cdot)=P^r_{[t_1,t_2)}\ \APLlog\ P^r_{[t_2,t_3)}\ \APLlog\ ...\ \APLlog\ P^r_{[t_{v},t_{v+1}=T)},
		\end{equation}
		in which $v$ is the number of parts and the $\APLlog$ operator denotes the concatenation of two signals.
		
		\item An arbitrary permutation of the part indices $1,2,...,v$. Letting $\psi$ denote the vector dictating the order of these parts, the partition induced by $\psi$ is then
		\begin{equation}
		\tilde{P}^r(\cdot)\doteq P^r_{[t_{\psi_1},t_{\psi_1+1})}\ \APLlog\ P^r_{[t_{\psi_2},t_{\psi_2+1})}\ \APLlog\ ...\ \APLlog\ P^r_{[t_{\psi_v},t_{\psi_v+1})}.
		\end{equation}
		\end{enumerate} 
	
		\noindent $P^r(\cdot)$ is feasible, and so let $x(\cdot)$ and $u(\cdot)$ satisfy the following conditions for all non-negative $t$: 
			\begin{equation}
			\begin{aligned}
			&u(t)\in \mathcal{U}_{\bar{p}},\ 1^Tu(t)=P^r(t),\\
			&\dot{x}(t)=-P^{-1}u(t),\  x_i(t) \geq 0,\ \ i=1,2,...,n.
			\end{aligned}
			\end{equation}		
		Moreover, due to the inability of devices to charge, we can utilise the fact that $x(t)\geq0\ \ \forall t\iff x(T)\geq0$ for an expression equivalent to the last condition. Now, consider permuting the input $u(\cdot)$ in time, corresponding to the permutation of the reference, to produce $\tilde{u}(\cdot)$, i.e.
		\begin{equation}
		\tilde{u}(\cdot)\doteq u_{[t_{\psi_1},t_{\psi_1+1})}\ \APLlog\ u_{[t_{\psi_2},t_{\psi_2+1})}\ \APLlog\ ...\ \APLlog\ u_{[t_{\psi_v},t_{\psi_v+1})},
		\end{equation}
		in which $u_{[t_1,t_2)}$ denotes the signal $u(\cdot)$ truncated to the half-open interval $[t_1,t_2)$. Consider each condition of the feasibility of $\tilde{P}^r(\cdot)$ individually as follows. By construction, for all non-negative $t$,
		\begin{equation}
		\begin{aligned}
		\tilde{u}(t)&\in\mathcal{U}_{\bar{p}}, \\
		1^T\tilde{u}(t)&=\tilde{P}^r(t), \\
		\dot{\tilde{x}}(t)&=-P^{-1}\tilde{u}(t),
		\end{aligned}
		\end{equation}
		in which $\tilde{x}(\cdot)$ denotes the trajectory that fulfils the permuted reference and is initialised as $\tilde{x}(0)=x(0)$. In addition,
		\begin{equation}
		\begin{aligned}
		\tilde{x}(T)&=\tilde{x}(0)-\int_0^TP^{-1}\tilde{u}(\tau)d\tau\\
		&=x(0)-\int_0^TP^{-1}{u}(\tau)d\tau={x}(T)\geq0.
		\end{aligned}
		\end{equation}
		Therefore $\tilde{P}^r(\cdot)$ satisfies all the necessary conditions and we are able to deduce that $P^r(\cdot)\in\mathcal{F}_{\bar{p},x} \iff 	\tilde{P}^r(\cdot)\in\mathcal{F}_{\bar{p},x}$,
		in which the condition is necessary as well as sufficient as a result of its symmetry.
		\qed
	}
	
	\noindent One can interpret this result to mean that the ability of a system to satisfy a given reference signal is independent of the variation of that signal in time; rather it is dependent on the total time spent at each power level.

	\begin{lemma}
		\label{lem:terminal state}
		Given two feasible piecewise constant reference signals which are permuted versions of one another, the final state under both references is the same when the optimal policy is used.
	\end{lemma}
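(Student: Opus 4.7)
The plan is to leverage the trajectory-permutation construction used within the proof of Lemma~\ref{lem:commutable refs}, combined with the feasible-set maximisation property of the optimal policy established in \cite{Evans2017}. The idea is that both orderings of the reference reach the same set of attainable terminal states, and the optimal policy picks out the flexibility-maximising one in each case.

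More concretely, let $z^*(\cdot)$ and $\tilde z^*(\cdot)$ denote the optimal closed-loop trajectories for $P^r$ and its permutation $\tilde P^r$ respectively, both initialised at the common $x$. First I would observe that the construction in the proof of Lemma~\ref{lem:commutable refs} delivers more than just feasibility: if we take the optimal control $u^*(\cdot)$ for $P^r$ and permute its pieces using the same permutation that carries $P^r$ into $\tilde P^r$, the resulting permuted control is feasible for $\tilde P^r$ and its trajectory terminates at exactly $z^*(T)$. Hence $z^*(T)$ belongs to the set of terminal states reachable at time $T$ via some feasible trajectory for $\tilde P^r$, and by symmetry $\tilde z^*(T)$ is reachable in the analogous sense for $P^r$.

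Next I would invoke the main result of \cite{Evans2017}: since the optimal policy maximises the feasible set of continuations at every instant, its terminal state under $\tilde P^r$ satisfies $\mathcal{F}_{\bar p,\tilde z^*(T)} \supseteq \mathcal{F}_{\bar p,y}$ for every $y$ reachable at time $T$ for $\tilde P^r$. Taking $y = z^*(T)$ and then reversing the roles of the two references yields the equality $\mathcal{F}_{\bar p,z^*(T)} = \mathcal{F}_{\bar p,\tilde z^*(T)}$. A short injectivity argument, for instance by probing feasibility against constant power-reference signals of varying magnitude and duration, would then show that the map $z \mapsto \mathcal{F}_{\bar p,z}$ is one-to-one (modulo relabelling of devices with identical maximum power ratings, a symmetry under which both the state and the closed-loop dynamics are invariant), forcing $z^*(T) = \tilde z^*(T)$.

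The hard part will be this final injectivity step: verifying that equality of feasible sets really does pin down the $(\bar p_i, x_i)$ profile requires extracting the state distribution from the family of admissible signals. Should this step prove awkward to formalise, an alternative route is to reduce to adjacent transpositions by bubble sort and to attack commutativity of the flows $\Phi_p^\tau$ directly; within any region of $\mathcal{X}$ on which the subset partition induced by the optimal policy is constant the vector field is state-independent and flows commute trivially, but the bookkeeping of subset-merger events across the two orderings would then need careful case-by-case handling.
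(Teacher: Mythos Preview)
Your approach is essentially the one taken in the paper: first establish $\mathcal{F}_{\bar p,z^*(T)}=\mathcal{F}_{\bar p,\tilde z^*(T)}$, then argue injectivity of the map $z\mapsto\mathcal{F}_{\bar p,z}$. The paper packages the first step slightly differently (it appends an arbitrary feasible tail to both the original and permuted prefix and invokes Lemma~\ref{lem:commutable refs}, rather than permuting the optimal control and invoking the reachable-set maximality of \cite{Evans2017}), but the content is the same.

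On the injectivity step, the paper does not use constant pulses; it first observes that order of time-to-go is preserved under the optimal policy, so that $z^*(T)$ and $\tilde z^*(T)$ are both sorted with the same device indexing, and then probes with \emph{step-down} references of the form $\sum_{i=1}^n\bar p_i$ on $[0,z_n)$, $\sum_{i=1}^{n-1}\bar p_i$ on $[z_n,z_{n-1})$, etc. This peels off the sorted coordinates one at a time by an easy induction and sidesteps the relabelling ambiguity you flagged. If you find constant pulses awkward to work with, this is the clean route and avoids the flow-commutativity bookkeeping of your fallback plan entirely.
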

	\newcommand{\proofTerminalState}{
	Let the system under consideration have initial state $x$ and maximum power vector $\bar{p}$. Then consider partitioning a feasible piecewise constant reference signal $P^r(\cdot)$, of length $T$, into the partitions before and after time $t$, i.e. $P^r(\cdot)=P^r_{[0,t)}\ \APLlog\ P^r_{[t,T)}$. Now form a new signal as $\hat{P}^r(\cdot)\doteq\tilde{P}^r_{[0,t)}\ \APLlog\ P^r_{[t,T)}$, in which $\tilde{P}^r_{[0,t)}$ is a permuted version of $P^r_{[0,t)}$. Using the result of Lemma~\ref{lem:commutable refs}, we know that if $P^r(\cdot)$ is feasible then so too must be $\hat{P}^r(\cdot)$. 
	
	Denote by $z$ and $\tilde{z}$ the states reached at time $t$ under the application of the optimal policy to meet $P^r(\cdot)$ and $\hat{P}^r(\cdot)$ respectively. Without loss of generality, let the vector $x$ be ordered in decreasing time-to-go, and let the same device ordering be used for $z$ and $\tilde{z}$. Then, because ordering is preserved under the optimal policy, $z$ and $\tilde{z}$ will also be in descending order. From the above result, we are able to say that $P^r_{[t,T)}\in\mathcal{F}_{\bar{p},z}$ and $P^r_{[t,T)}\in\mathcal{F}_{\bar{p},\hat{z}}$. As this holds for any feasible choice of reference $P^r(\cdot)$, we are then able to say that $\mathcal{F}_{\bar{p},\hat{z}}=\mathcal{F}_{\bar{p},z}$. We use this to prove the required result by contradiction as follows. Consider firstly the case in which $z_n>\tilde{z}_n$, in which $n$ indexes the device with the smallest time-to-go value. We are able to construct the following reference signal:
	\begin{equation}
	P^r(t)=\left\{
	\begin{array}{@{}ll@{}}
	\sum_{i=1}^n\bar{p}_i, & \text{if}\ t\in[0,z_n) \\
	0, & \text{otherwise},
	\end{array}\right.
	\end{equation}
	for which we can trivially see that $P^r(\cdot)\in\mathcal{F}_{\bar{p},z}$ and $P^r(\cdot)\notin\mathcal{F}_{\bar{p},\tilde{z}}$, hence the feasible sets defined by the two states must be distinct. 
	
	Now, consider an arbitrary device index $j$ and assume that 
	\begin{subequations}
		\begin{gather}
		z_i=\tilde{z}_i,\ i=n,n-1,...,j+1,\\
		z_j> \tilde{z}_j.
		\end{gather}
	\end{subequations}
	We are able to construct the following reference signal:
	\begin{equation}
	P^r(t)=\left\{
	\begin{array}{@{}ll@{}}
	\sum_{i=1}^n\bar{p}_i, & \text{if}\ t\in[0,z_n) \\
	\sum_{i=1}^{n-1}\bar{p}_i, & \text{if}\ t\in[z_n,z_{n-1})\\
	\vdots \\
	\sum_{i=1}^{j}\bar{p}_i, & \text{if}\ t\in[z_{j+1},z_j)\\
	0, & \text{otherwise},
	\end{array}\right.
	\end{equation}
	for which we can trivially see that $P^r(\cdot)\in\mathcal{F}_{\bar{p},z}$ and $P^r(\cdot)\notin\mathcal{F}_{\bar{p},\tilde{z}}$, hence the feasible sets defined by the two states must be distinct. Induction of this argument from device index $n$ to $1$ returns the result that the feasible sets must be distinct whenever $\exists k\colon z_k>\tilde{z}_k$. In addition, due to the arbitrary allocation of $z$ and $\tilde{z}$, this result must hold whenever $z\neq\tilde{z}$. Hence the permutation must not alter the terminal state.
	\qed
}

\subsection{$E$-$p$ transform}
The ability to utilise the policy \eqref{eq: explicit policy} as a proxy for the feasible set also enables the derivation of a graphical representation of this set. In this section we present a transform, followed by a discussion of its uses to a grid operator or aggregator. The proofs of Lemma~\ref{lem:convex} and Theorem~\ref{the:E_p} can be found in the \nameref{sec:appendix}.
\begin{definition}
	Given a power reference $P^r\colon [0,+\infty)\mapsto[0,+\infty)$, we define its E-p transform as the following function:
	\begin{equation}
	E_{P^r}(p)\doteq\int_0^\infty \textnormal{max}\big\{P^r(t)-p,0\big\}dt,
	\end{equation}
	interpretable as the energy supplied above any given power rating, $p$.
\end{definition}
\noindent The following properties of this transform are of particular relevance to our analysis:
\begin{property}
	The $E$-$p$ transform intersects the $E-$ and $p-$axes at $\int_0^\infty P^r(t)dt$ (total energy supplied) and $\underset{t}{\textnormal{sup}}\ P^r(t)$ (maximum power supplied) respectively.
\end{property}
\begin{lemma}
	\label{lem:convex}
	The $E$-$p$ transform is convex and monotone.
\end{lemma}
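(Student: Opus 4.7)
The plan is to prove both properties by exploiting pointwise structure of the integrand $g_t(p)\doteq\max\{P^r(t)-p,0\}$ as a function of $p$, and then transferring these properties through the integral over $t$.

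For monotonicity, I would observe that for each fixed $t\geq 0$ and any $p_1\leq p_2$, the quantity $P^r(t)-p$ is a decreasing function of $p$, so $\max\{P^r(t)-p_1,0\}\geq\max\{P^r(t)-p_2,0\}$. Integrating this pointwise inequality in $t$ over $[0,\infty)$ immediately yields $E_{P^r}(p_1)\geq E_{P^r}(p_2)$, so $E_{P^r}$ is (non-strictly) monotone decreasing. This step is essentially a one-line observation.

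For convexity, I would first show that for each fixed $t$ the integrand $g_t(p)$ is convex in $p$. This follows because $g_t(p)$ is the pointwise maximum of two affine functions of $p$, namely $p\mapsto P^r(t)-p$ and the constant $p\mapsto 0$; the maximum of affine functions is convex. Concretely, for $\lambda\in[0,1]$ and any $p_1,p_2\geq 0$,
\begin{equation}
\max\{P^r(t)-\lambda p_1-(1-\lambda)p_2,\,0\}\leq \lambda\max\{P^r(t)-p_1,0\}+(1-\lambda)\max\{P^r(t)-p_2,0\},
\end{equation}
which is the pointwise convexity inequality. I would then integrate both sides in $t$ over $[0,\infty)$; since both sides are non-negative and the right-hand side is integrable whenever $E_{P^r}(p_1)$ and $E_{P^r}(p_2)$ are finite, linearity of the integral preserves the inequality and delivers
\begin{equation}
E_{P^r}(\lambda p_1+(1-\lambda)p_2)\leq \lambda E_{P^r}(p_1)+(1-\lambda)E_{P^r}(p_2),
\end{equation}
which is exactly convexity.

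The only subtlety I anticipate is a mild measure-theoretic one: one wants $E_{P^r}$ to be a well-defined (extended-real-valued) function, which requires $P^r$ to be measurable and the integral to be meaningful (possibly infinite) for each $p$. Since the transform has already been \emph{defined} in the preceding definition, I would take this for granted and simply note that the argument above is valid under the standing assumption that the integral exists. No other obstacle is foreseen; the proof is essentially the observation that convexity and monotonicity are inherited pointwise from the integrand and then preserved by integration.
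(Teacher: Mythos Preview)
Your proof is correct. The monotonicity argument is essentially identical to the paper's: both observe the pointwise inequality $\max\{P^r(\tau)-p,0\}\leq\max\{P^r(\tau)-q,0\}$ for $p\geq q$ and integrate.

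For convexity, however, you take a genuinely different route. You argue that $p\mapsto\max\{P^r(t)-p,0\}$ is the pointwise maximum of two affine functions, hence convex for each fixed $t$, and that integration over $t$ preserves the convexity inequality. The paper instead computes the one-sided derivatives explicitly,
\[
\frac{dE(p)}{dp}^{-}=-\mu\big(\{\tau\colon P^r(\tau)\geq p\}\big),\qquad
\frac{dE(p)}{dp}^{+}=-\mu\big(\{\tau\colon P^r(\tau)> p\}\big),
\]
and observes that these are negative and monotonically increasing in $p$, which forces convexity. Your argument is more elementary and requires no differentiation or measure-theoretic computation; it also sidesteps any question of whether the derivative exists. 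The paper's approach, on the other hand, delivers an explicit formula for the slope of the $E$-$p$ curve as the Lebesgue measure of a super-level set of $P^r$, which is additional structural information that your argument does not surface.
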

\newcommand{\proofConvex}{
	Consider two arbitrary power levels, $p$ and $q$ for which $p\geq q$. These must satisfy
	\begin{equation}
	\text{max}\big\{P^r(\tau)-p,0\}\leq \text{max}\big\{P^r(\tau)-q,0\}\ \ \forall \tau,
	\end{equation}
	and so $E_{P^r}(p)\leq E_{P^r}(q)$, i.e. the $E$-$p$ curve is monotone.	In addition, the definition of the $E$-$p$ transform leads to the following left- and right-derivatives respectively:
	\begin{subequations}
	\begin{gather}
	\frac{dE(p)}{dp}^-=-\mu\big(\big\{\tau\colon P^r(\tau)\geq p\big\}\big),\\
	\frac{dE(p)}{dp}^+=-\mu\big(\big\{\tau\colon P^r(\tau)> p\big\}\big),
	\end{gather}
	\end{subequations}
	in which $\mu(\cdot)$ denotes the Lebesgue measure operator.	Hence
	\begin{equation}
	0\geq\frac{dE(p)}{dp}\bigg\vert_p\geq\frac{dE(p)}{dp}\bigg\vert_q,
	\end{equation}
	thus the gradient is negative and monotonically increasing, and so the curve must be convex.
	\qed
}
\noindent We additionally define the \textit{worst-case} reference signal, and form its $E$-$p$ transform as follows:

\begin{definition}
	\label{fact:capacity}
	The worst-case reference signal, $R(\cdot)$, that can be fulfilled by a given system runs all devices at full power until they deplete; which will take place in order of ascending time-to-go. 
	This signal can be calculated as follows:
	\begin{equation}
	R(t)\doteq\sum_{i=1}^n\bar{p}_i[H(t)-H(t-x_i)],
	\label{eq:worst-case_ref}
	\end{equation}
	in which $H(\cdot)$ denotes the Heaviside step function.
\end{definition}
\begin{definition}
	\label{definition: capacity}
	We define the \textbf{capacity} of a system, $\Omega_{\bar{p},x}(p)$, to be the $E$-$p$ transform of the worst-case reference signal that can be met by the system,
	\begin{equation}
	\Omega_{\bar{p},x}(p)\doteq E_{R}(p),
	\end{equation}
	in which $R(\cdot)$ is defined as in \eqref{eq:worst-case_ref}.
\end{definition}

\noindent We now consider comparing the $E$-$p$ transform of an arbitrary reference to the capacity as follows.
\begin{theorem}
	\label{the:E_p}
	A reference signal $P^r(\cdot)$ is feasible if, and only if, its $E$-$p$ transform is dominated by the capacity of the system, i.e. $E_{P^r}(p)\leq \Omega_{\bar{p},x}(p)\ \ \forall p\iff P^r(\cdot)\in\mathcal{F}_{\bar{p},x}$.
\end{theorem}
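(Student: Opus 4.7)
My plan is to reduce the problem to the case of non-increasing reference signals, and then to relate the $E$-$p$ dominance condition to a more tractable cumulative-integral dominance condition, from which both implications of the equivalence become accessible.

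First, observe that $E_{P^r}(p) = \int \max\{P^r(t)-p,0\}\,dt$ depends on the signal only through the measure of time it spends at each power level, and so is invariant under measure-preserving time rearrangements; in particular, $E_{P^r} = E_{P^{r*}}$, where $P^{r*}$ denotes the decreasing rearrangement. Together with Lemma~\ref{lem:commutable refs} (extended from piecewise-constant signals by a standard approximation), feasibility is also rearrangement-invariant, so it suffices to prove the theorem in the case where $P^r$ is non-increasing.

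The core ingredient is the following auxiliary equivalence, which I would establish separately: for non-increasing, non-negative functions $f, g$ on $[0,\infty)$, $E_f(p) \leq E_g(p)$ for all $p \geq 0$ if and only if $\int_0^T f(t)\,dt \leq \int_0^T g(t)\,dt$ for all $T \geq 0$. The forward direction of this equivalence follows from the Legendre-type identity $\int_0^T f = \inf_{p \geq 0}\bigl[Tp + E_f(p)\bigr]$, which is itself a consequence of the layer-cake representation $E_f(p) = \int_p^\infty \mu(\{f>s\})\,ds$; the reverse direction follows from a short case analysis comparing $\tau_f(p) := \sup\{t\colon f(t) > p\}$ with $\tau_g(p)$.

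Given the equivalence, the forward direction of the theorem ($P^r \in \mathcal{F}_{\bar p,x} \Rightarrow E_{P^r} \leq \Omega$) is routine: for any feasible control $u(\cdot)$ realising $P^r$, the per-device power and energy constraints give $\int_0^T P^r(t)\,dt = \sum_i \int_0^T u_i(t)\,dt \leq \sum_i \min\{\bar p_i T,\bar p_i x_i\} = \int_0^T R(t)\,dt$, which the auxiliary equivalence converts into $E_{P^r} \leq \Omega$. The converse is where I expect the main obstacle: the equivalence gives $\int_0^T P^{r*}(t)\,dt \leq \int_0^T R(t)\,dt$ for all $T$, but one must exhibit a concrete feasible control for $P^{r*}$. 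My plan is to apply the optimal policy~\eqref{eq: explicit policy} to the non-increasing $P^{r*}$ and to argue by induction on device depletion events that the cumulative-integral bound at the relevant times prevents any device from being prematurely exhausted, exploiting the fact that for a non-increasing reference the optimal policy has a predictable depletion order obtained by equalising time-to-go values from the top down. An alternative that may be cleaner in practice is the contrapositive: an infeasibility of $P^{r*}$ under the optimal policy must manifest as a premature depletion event that can be translated into a specific power level $p^*$ violating $E_{P^{r*}}(p^*) \leq \Omega(p^*)$.
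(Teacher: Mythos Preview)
Your plan is sound and takes a genuinely different route from the paper. The paper never introduces the cumulative-integral condition $\int_0^T P^{r*}\le\int_0^T R$ or the Legendre-type duality you propose; instead it argues both directions by tracking trajectories of the optimal policy directly. For the forward implication the paper (its Lemma~A.1/A.3) compares the aggregate energy vector $1^T\tilde E(t)$ under the optimal policy against $1^TE^*(t)$ under $R(\cdot)$, and bounds $E_{P^r}(p)$ via the two stopping times $t_P,t_R$ at level $p$. For the converse (Lemma~A.4) it performs a Lyapunov-style computation: it fixes the particular power level $s=\sum_{i<q(x(\infty))}1^T\bar U_i$, shows that along the closed-loop solution $\tfrac{d}{dt}\Omega\big(s,x(t)\big)=-\max\{P^r(t)-s,0\}$, and integrates to obtain $\Omega(s,x(\infty))\ge 0$, which forces the minimal time-to-go to stay non-negative.

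Your majorization framing is cleaner conceptually---the auxiliary equivalence is exactly the Hardy--Littlewood--P\'olya characterisation of weak majorization for non-increasing functions, and your forward argument via $\int_0^T u_i\le\min\{\bar p_iT,\bar p_ix_i\}$ is more transparent than the paper's trajectory comparison. Two points deserve care, however. First, the step ``$P^r$ feasible $\Rightarrow$ $P^{r*}$ feasible'' is needed (since $\int_0^T P^r\le\int_0^T R$ does not by itself give $\int_0^T P^{r*}\le\int_0^T R$), and Lemma~\ref{lem:commutable refs} is stated only for piecewise constant signals; the approximation you allude to must be combined with a dominance argument of the type the paper proves separately (its Lemma~A.2). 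Second, your converse is the part left as a sketch, and it is precisely where the paper invests the most effort: the ``induction on depletion events'' idea is workable but will in effect have to reproduce the content of the paper's differential identity---namely that the $E$--$p$ gap at the relevant threshold level decreases exactly by $\max\{P^r-s,0\}$ per unit time under~\eqref{eq: explicit policy}. If you pursue the contrapositive, the natural candidate for $p^*$ is this same level $s$ evaluated at the first failure time, so the two routes converge.
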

	
\noindent This result justifies our use of the terminology \emph{worst-case} reference signal for $R(\cdot)$ and \emph{capacity} for its $E$-$p$ transform. Definitions~\ref{fact:capacity} and \ref{definition: capacity} allow us to transform the complex object that is the set of feasible reference signals into a simple one-dimensional curve: that corresponding to the capacity of the system. This then enables a wide range of fleet capability assessments to be undertaken. For a given fleet, one could ascertain, for example, the longest ramp input of set gradient or highest-gradient ramp of a fixed duration that could be satisfied. Likewise, this could apply to pulse inputs, as we will demonstrate in Section~\ref{sec:numerical results}, or system services such as primary or secondary response.

If, in addition to the capacity curve, we form the $E$-$p$ transform of a received reference signal, Theorem~\ref{the:E_p} then allows for immediate and straightforward determination of the reference feasibility; simply by testing whether the capacity curve dominates the reference curve. This operation might provide valuable insight to a grid operator or aggregator when contemplating the feasibility of a request profile. Moreover, since the state fully defines the capacity of the system, the grid operator or aggregator would be able to make such deductions based solely upon the current state of their system. 

We are also able to use Theorem~\ref{the:E_p} to compare different device fleets as follows:
\begin{corollary}
	\label{cor:comparison}
	The feasible set of system-state pair $(\bar{p}^a,x^a)$ includes that of $(\bar{p}^b,x^b)$ if, and only if, the capacity curve of $(\bar{p}^a,x^a)$ dominates that of $(\bar{p}^b,x^b)$, i.e.
	\begin{equation}
	\Omega_{\bar{p}^a,x^a}(p)\geq\Omega_{\bar{p}^b,x^b}(p)\ \ \forall p\iff\mathcal{F}_{\bar{p}^a,x^a}\supseteq\mathcal{F}_{\bar{p}^b,x^b}.
	\end{equation}
\end{corollary}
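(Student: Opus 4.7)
The plan is to show both implications of the corollary as direct consequences of Theorem~\ref{the:E_p}, using the worst-case reference signal $R(\cdot)$ as the pivotal object. The key observation, which I will establish first, is that the worst-case reference signal $R^b(\cdot)$ associated with $(\bar{p}^b, x^b)$ is itself feasible from that state, i.e.\ $R^b(\cdot) \in \mathcal{F}_{\bar{p}^b, x^b}$. This follows directly from the definition in \eqref{eq:worst-case_ref}: the control that runs each device $D_i$ at full power $\bar{p}_i$ until it depletes at time $x_i$ and zero afterwards is admissible and, when summed across devices, reproduces $R^b$. Hence by Theorem~\ref{the:E_p}, $E_{R^b}(p) = \Omega_{\bar{p}^b, x^b}(p) \leq \Omega_{\bar{p}^b, x^b}(p)$, which is self-consistent and confirms $R^b(\cdot)$ sits on the boundary of its own feasible set.

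For the sufficiency direction ($\Leftarrow$), assume $\Omega_{\bar{p}^a,x^a}(p) \geq \Omega_{\bar{p}^b,x^b}(p)$ for all $p \geq 0$. Pick any $P^r(\cdot) \in \mathcal{F}_{\bar{p}^b, x^b}$. Applying Theorem~\ref{the:E_p} once to the system $b$ gives $E_{P^r}(p) \leq \Omega_{\bar{p}^b, x^b}(p)$ for all $p$; chaining this with the hypothesis yields $E_{P^r}(p) \leq \Omega_{\bar{p}^a, x^a}(p)$ for all $p$. Applying Theorem~\ref{the:E_p} in the other direction to the system $a$ then delivers $P^r(\cdot) \in \mathcal{F}_{\bar{p}^a, x^a}$, establishing the inclusion of feasible sets.

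For the necessity direction ($\Rightarrow$), assume $\mathcal{F}_{\bar{p}^a,x^a} \supseteq \mathcal{F}_{\bar{p}^b,x^b}$. By the preliminary observation, $R^b(\cdot) \in \mathcal{F}_{\bar{p}^b, x^b}$, so the inclusion hypothesis gives $R^b(\cdot) \in \mathcal{F}_{\bar{p}^a, x^a}$. Invoking Theorem~\ref{the:E_p} on system $a$ with the reference $R^b(\cdot)$ then yields $E_{R^b}(p) \leq \Omega_{\bar{p}^a, x^a}(p)$ for all $p$, and by Definition~\ref{definition: capacity} the left-hand side is exactly $\Omega_{\bar{p}^b, x^b}(p)$, producing the required pointwise inequality.

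There is no real obstacle here beyond being careful that the worst-case reference is indeed feasible from its own state, so that it can be used as a probe signal in the necessity argument; this is why $R(\cdot)$ is the natural witness, being precisely the extremal feasible signal that saturates Theorem~\ref{the:E_p}. The corollary is therefore essentially a restatement of Theorem~\ref{the:E_p} lifted from individual reference signals to entire feasible sets via this extremal witness.
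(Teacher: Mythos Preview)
Your proof is correct and is exactly the natural derivation from Theorem~\ref{the:E_p} that the paper intends; the paper states the result as an unproved corollary immediately after Theorem~\ref{the:E_p}, and your argument supplies the obvious details (chaining the $E$-$p$ domination through $\Omega_{\bar{p}^b,x^b}$ for one direction, and using the worst-case reference $R^b(\cdot)$ as the witness signal for the other).
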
 

\noindent An alternative use of the $E$-$p$ transform would therefore be the comparison among fleets exemplified by the motivating example of the \nameref{sec:intro}. We also point out here that, as a result of the convexity and monotonicity of the transform, forming a capacity curve based upon lower bounds to the charge level of each device would allow any of these assessments to be implemented in a robust manner. Coupled with Lemma~\ref{lem:convex}, Corollary~\ref{cor:comparison} also allows us to deduce the following:

\begin{property}
	Given total energy and total power ratings $E_{max}$ and $p_{max}$ respectively, the most flexible distribution of devices has a capacity curve that is a straight line between $(0,E_{max})$ and $(p_{max},0)$. This is equivalent to a single device of the same ratings.
\end{property}
\begin{property}
	The deviation of a system's capacity curve from the maximum-flexibility curve of the same total ratings represents a \textbf{flexibility gap} resulting from its heterogeneous nature.
\end{property}

\vspace{-3mm}
\section{Numerical Results}
\label{sec:numerical results}
\subsection{Comparison between fleets}
We here present the capacities of the three device configurations discussed as a motivating example in the \nameref{sec:intro}. The three capacity curves can be seen in Figure~\ref{fig:2_battery_comp}, in which it can be seen that C is the unambiguous best choice as its $E$-$p$ curve dominates the others. Clearly, the more even distribution of C as compared to A offers more flexibility. When comparing A and B, if the network operator expects high-power, short-duration or low-power, long-duration request signals then A is preferred. If, however, signals with intermediate values of power and energy are expected, then B is preferred instead.
\begin{figure}[h]
	\centering
	\includegraphics{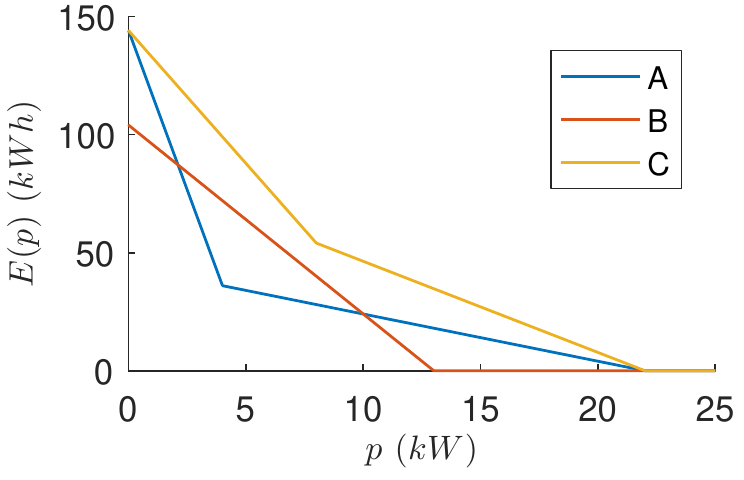}
	\caption{Capacity curves for the motivating example of Section~\ref{sec:motivating example}}
	\label{fig:2_battery_comp}
\end{figure}

\subsection{Feasibility of a received request}
\subsubsection{Alternative policy choices}
As in \cite{Evans2017}, we now compare indicative behaviour under the optimal policy to the implementation of two greedy heuristic alternatives. Note that \cite{Edwards2017} discusses sub-optimal policies such as these in more detail. The comparative policies chosen are as follows:

\begin{enumerate}[i)]
	\item \textit{Lowest Power First}. Order the devices by maximum power, without loss of generality leading to $\bar{p}_1\leq\bar{p}_2\leq...\leq\bar{p}_n$, and allocate devices in order from $D_1$ to $D_n$ as
\begin{equation}
u_i=\mathds{1}[x_i>0]\cdot\min\big\{\bar{p}_i,\ P^r-\sum_{j<i}u_j \big\},
\end{equation}	
in which $\mathds{1}[\cdot]$ denotes the indicator function. Note that the choice of allocation between devices of equal maximum powers is made arbitrarily. \\

\item \textit{Proportion of Power}. In this case, no ordering is required and each device is run according to
\begin{equation}
u_i=\mathds{1}[x_i>0]\cdot\min\bigg\{\bar{p}_i,\ \frac{\bar{p}_iP^r}{\sum_{i\colon x_i>0}\bar{p}_i}\bigg\}.
\end{equation}
\end{enumerate}
\subsubsection{Simulation results}
We compose a scenario in which there are 10,000 devices with initial time-to-go and maximum power values generated from uniform distributions, as $x_i\ {\sim}\ U(0,10)$~h and $\bar{p}_i\ {\sim}\ U(0,1.5)$~kW respectively. In an attempt to model a realistic dispatch, we choose a stepwise reference signal that is updated hourly, and draw each value $P^r[k]$ from the normal distribution $P^r[k]\ {\sim}\ N(2,0.8)$~MW. In this way all devices will be depleted by the end of a single day according to the optimal policy, and we set our simulation horizon to 1 day. We point out here that distributions are used solely for setting parameter values and therefore that this is a deterministic setup. The results of this case study can be seen in Figure~\ref{fig:AvailPlusRef}, and can be interpreted as follows. The optimal policy provides the highest feasible reference up to its time to failure, as it postpones emptying devices until absolutely necessary, resulting in the latest time to failure. If this scenario represents some failure mode requiring less than 12.8 h for resolution, any of the three policies are capable of maintaining full functionality. If, however, resolution requires between 13.2 and 16.3 h, the policy that we present is the only one out of the three that is capable of avoiding lost load. Beyond 16.3~h we are able to say that there exists no policy capable of avoiding lost load.

\captionsetup[figure]{font=small, skip=2pt}
\begin{figure}[h]
	\centering
	\includegraphics{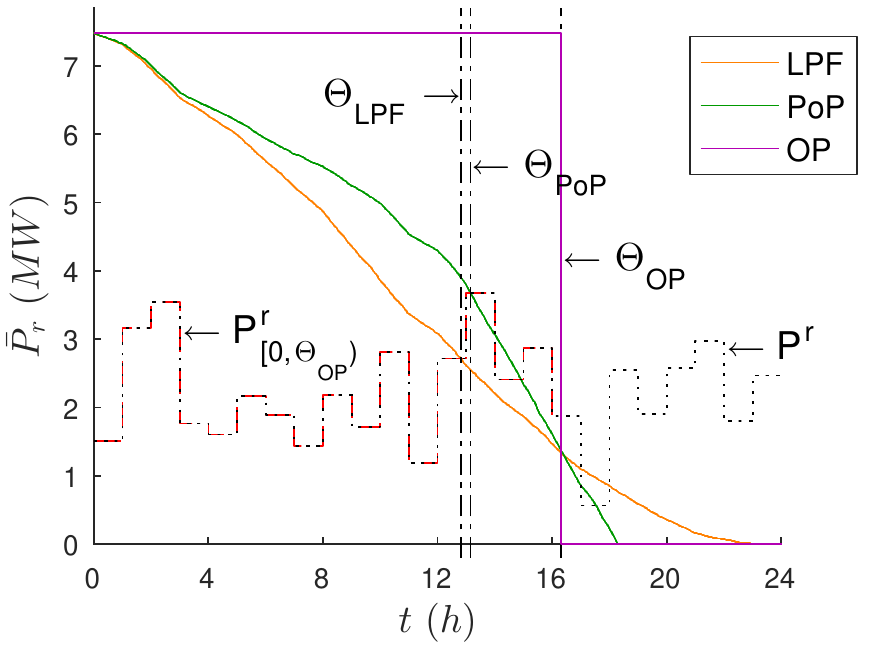}
	\caption{The maximum available power, $\bar{P}^r(t)$, under the implementation of the optimal (OP), lowest power first (LPF) and proportion of power (PoP) policies. The time to failure under each policy is represented by a black dash-dotted line and denoted $\Theta$. The corresponding reference signal is shown for comparison, both in full (the dotted black line) and truncated to the time to failure under the optimal policy (the dashed red line).}
	\label{fig:AvailPlusRef}
\end{figure}

\subsubsection{E-p analysis}
For the scenario described above, we here demonstrate the use of the $E$-$p$ transform in determining the feasibility of the full reference over the 24 h period (known to be infeasible) and in addition the same signal truncated to the time to failure under the optimal policy (feasible by construction), as can be seen in Figure~\ref{fig:E(p)}. We also plot the maximum-flexibility curve for comparison, and highlight the flexibility gap. As it can be seen, the truncation of the reference signal corresponds to bringing its $E$-$p$ transform to just below the capacity curve, i.e. into the feasible region. In addition, the $E$-$p$ and capacity curves intersect at $p=0$, which corresponds to the use of all available energy and the resulting depletion of all devices by the time to failure. Moreover, in this case there would be no advantage to homogenising the fleet, as the same truncation of the reference would be required to achieve feasibility.

\begin{figure}[h]
	\centering
	\includegraphics{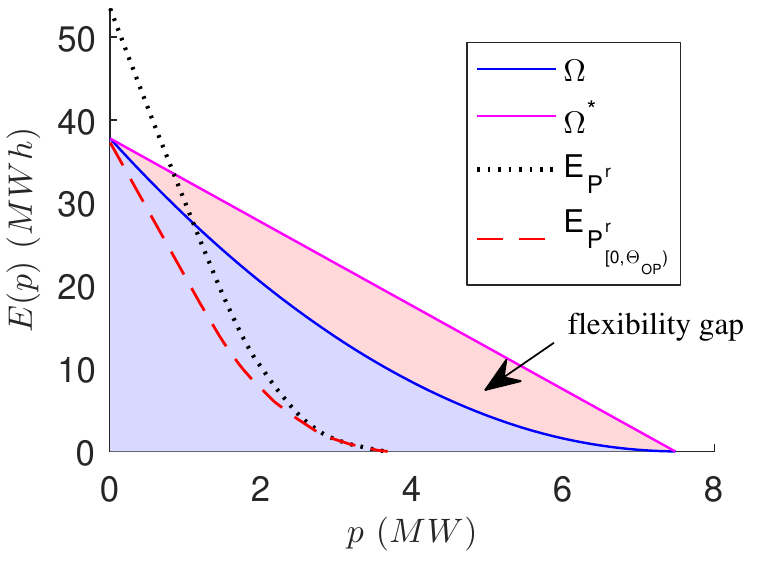}
	\caption{Use of the $E$-$p$ transform to determine reference feasibility, for the case study of Figure~{\ref{fig:AvailPlusRef}}. Capacity curves are represented by solid lines and transformed reference curves by dashed or dotted lines (consistent with Figure~{\ref{fig:AvailPlusRef}}). $\Omega$ denotes the capacity curve, and the feasible region defined by this is shaded in blue. $\Omega^*$ denotes the maximum-flexibility capacity curve; the red shaded area between these two curves is the flexibility gap.}
	\label{fig:E(p)}
\end{figure}

\subsection{Fleet capability assessment}
We here determine the capability of a fleet to provide a user-defined system service. We choose as an example service a pulse input of fixed duration, and demonstrate how the $E$-$p$ transform can be used to find the largest-magnitude feasible pulse of 15 s duration. We choose the fleet to be 10 devices with equal maximum discharge rating $\bar{p}_i\ {=}\ 1.5$~kW. We generate initial state values for half the fleet uniformly, as $x_i\ {\sim}\ U(0,20)$~s, and choose the other half to start full, at $x_i\ {=}\ 20$~s. This setup might, for example, represent uninterruptible power supplies that are allowed to take part in short-duration frequency regulation. A pulse signal will have a corresponding $E$-$p$ curve that is linear, of gradient equal to the negative of the duration. The pulse magnitude is then equal to the $p$-intercept of this curve. Our task is to find the curve of this form with the largest $p$-intercept out of those which lie in the feasible region. Figure~\ref{fig:primary/secondary} demonstrates this approach across three pulses of varying magnitude, which are then shown in Figure~\ref{fig:primary/secondary2}, and it can be seen that an 11.2~kW pulse is the largest that the fleet can feasibly meet. Note that this example is composed at a drastically smaller device scale than was previously explored, demonstrating how the presented technique is applicable regardless of the scale of the problem.

\begin{figure}[h]
	\centering
	\begin{subfigure}[l]{0.65\columnwidth}
		\centering
		\includegraphics[width=\columnwidth]{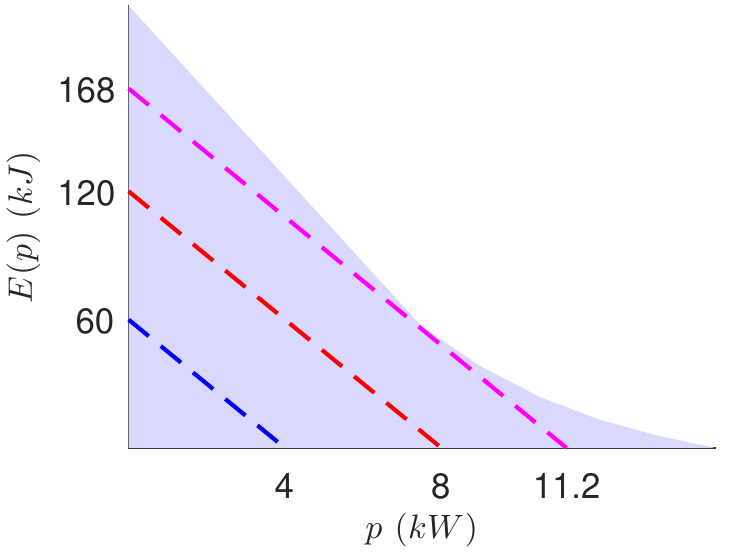}
		\caption{Three example $E$-$p$ curves corresponding to pulse inputs of 15 s duration. The magnitude of each pulse is equal to the curve's $p$-intercept. The blue shaded area represents the feasible region.}
		\label{fig:primary/secondary}
	\end{subfigure}%
	\quad
	\begin{subfigure}[l]{0.6\columnwidth}
		\centering
		\vspace{5mm}
		\includegraphics[width=\columnwidth]{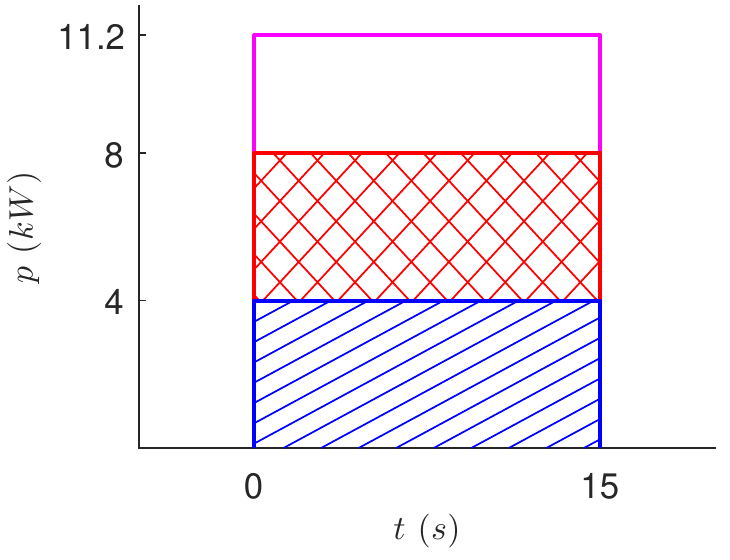}
		\caption{The three pulses of 15 s duration considered. To enable a direct comparison, the unfilled magenta pulse lies behind the cross-hatched red pulse, which in turn lies behind the hatched blue pulse.}
		\label{fig:primary/secondary2}
	\end{subfigure}
	\caption{Use of the $E$-$p$ transform to determine the largest feasible pulse magnitude of 15 s duration. The colour of each $E$-$p$ curve in Figure~\ref{fig:primary/secondary} matches that of the corresponding pulse signal in Figure~\ref{fig:primary/secondary2}.}
\end{figure}

\section{Discussion}
\label{sec:disc}
We here discuss the consequences of the above results. Firstly, as we have shown, the feasibility of a received or derived reference (for example that corresponding to a particular ancillary service) can be immediately determined. This analysis could also be extended into stochastic settings, i.e. those with random state, maximum power or reference values, by sampling scenario traces and rapidly checking the feasibility of each. This could, for example, allow a network operator to devise a day-ahead plan based upon a forecast as follows. If the probability, found through a Monte Carlo simulation, that the storage will be unable to meet the net demand exceeds some user-defined limit, the network operator could preventively take measures to reduce the high risk of loss of load. Alternatively, robust estimates could be composed from lower bounds on state or maximum power values, or from upper bounds on reference values. Similarly, the transform could be embedded in a range of algorithms, either as a rapid binary feasibility check or as inequality constraints in an optimisation problem. It is here where the computational improvements of the proposed transform over straightforward simulation might become crucial. 

Ongoing work involves extending these results into scenarios in which infeasibility is found to occur. The one-to-one mapping between feasible references and those that will be met by the optimal policy enables one to identify routes to feasibility in these cases. Moreover, the convenient mathematical properties (convexity, monotonicity) of the $E$-$p$ transform offer significant advantage in the construction of extended control strategies that minimise energy shortfalls.

We now briefly mention the scalability of the optimal policy in practical implementations. One need only broadcast the following three values: 1) the state above which devices should become active at full speed, 2) the state above which devices should become active at a fraction of full speed and 3) that fraction. The complexity of the broadcast will therefore remain unchanged as the number of devices increases. The calculation of these three values, however, requires up to one pass through the devices and will therefore scale as $\mathcal{O}(n)$. Similarly, it is worth mentioning the scalability of the proposed transform. This is formed via a single pass through the device fleet, hence scales as $\mathcal{O}(n)$. Where necessary, in order to decrease the computational complexity below this, it would also be possible to cluster devices by time-to-go and approximate from below the capacity curve for each cluster (using its smallest time-to-go value). The aggregated capacity curve could then be used to obtain a robust determination of feasibility.

\section{Conclusions and future work}
\label{sec: conclusion}
This paper has extended the results of \cite{Evans2017}, in the form of additional analytic properties of a system in which storage is used for grid support. We have conceived a transform that compactly represents the capacity of the system and can be used to characterise fleet capability; this then complements the optimal policy when making decisions at longer timescales. The transform can be used to calculate the maximum provision by the fleet of a range of system services, as well as directly compare flexibility across different fleets. In addition the $E$-$p$ transform allows for simple and immediate determination of the feasibility of any received request profile, through a comparison of $E$-$p$ curves. We have discussed the ways in which these assessments could be extended to stochastic settings. The optimal policy can of course also be used for device scheduling across a range of applications.

This work has predominantly considered settings in which references are feasible, only performing our analysis up until the time to failure under the optimal policy. In future work the authors intend to extend this into scenarios in which a reference cannot be met for all time, in which case they plan to investigate the policy that would result in the least amount of lost load. Additional future extensions include the incorporation of charging requests and cross-charging among devices. 

\section*{Appendix}
\label{sec:appendix}
\refstepcounter{section}
\addcontentsline{toc}{section}{Appendix}
\renewcommand{\theequation}{A.\arabic{equation}}	
\setcounter{equation}{0}
\subsection{Proof of Lemma~\ref{lem:commutable refs}}
\proofCommutableRefs
\subsection{Proof of Lemma~\ref{lem:terminal state}}
\proofTerminalState
\subsection{Proof of Lemma~\ref{lem:convex}}
\proofConvex
\subsection{Proof of Theorem~\ref{the:E_p}}
For this proof we initially form the following supporting Lemmas:
\begin{lemma}
	\label{lem:E-p piecewise const}
	If a \textbf{piecewise constant} reference signal $P^r(\cdot)$ is feasible, its $E$-$p$ transform is dominated by the capacity of the system, i.e. $P^r(\cdot)\in\mathcal{F}_{\bar{p},x}\implies E_{P^r}(p)\leq \Omega_{\bar{p},x}(p)\ \forall p$.
\end{lemma}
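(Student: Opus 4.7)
The plan is to reduce the statement, via Lemma~\ref{lem:commutable refs}, to the case of a monotone decreasing reference, and then to compare cumulative integrals against the worst-case signal $R(\cdot)$.

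First I would invoke Lemma~\ref{lem:commutable refs} to permute the pieces of $P^r(\cdot)$ into descending order of power, producing a feasible, monotone non-increasing, piecewise constant signal $\tilde{P}^r(\cdot)$. A direct calculation shows that for any $s$ the measure $\mu(\{t : P^r(t) > s\})$ is preserved by such a permutation, so by the layer-cake identity $E_{P^r}(p) = \int_p^\infty \mu(\{t : P^r(t) > s\})\,ds$ the $E$-$p$ transform is permutation-invariant: $E_{P^r}(p) = E_{\tilde{P}^r}(p)$ for every $p$. It therefore suffices to prove the bound for $\tilde{P}^r(\cdot)$.

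Next I would establish the cumulative inequality
\begin{equation*}
\int_0^t \tilde{P}^r(\tau)\,d\tau \leq \int_0^t R(\tau)\,d\tau \qquad \text{for all } t \geq 0.
\end{equation*}
A direct computation gives $\int_0^t R(\tau)\,d\tau = \sum_{i=1}^n \bar{p}_i \min\{t, x_i\}$, which is precisely the maximum aggregate energy that the fleet can discharge by time $t$ subject to the pointwise constraints $u(\tau) \in \mathcal{U}_{\bar{p}}$ and $z(\tau) \geq 0$; feasibility of $\tilde{P}^r$ then forces the inequality.

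Finally, since $\tilde{P}^r$ is monotone non-increasing, its super-level set $\{t : \tilde{P}^r(t) > p\}$ is an interval $[0, T_p)$, and
\begin{equation*}
E_{\tilde{P}^r}(p) = \int_0^{T_p} \tilde{P}^r(\tau)\,d\tau - p T_p \leq \int_0^{T_p} R(\tau)\,d\tau - p T_p \leq \int_0^{T_p} \max\{R(\tau) - p, 0\}\,d\tau \leq E_R(p) = \Omega_{\bar{p},x}(p),
\end{equation*}
which combined with the permutation invariance yields the claim. The one step that requires care is the identification of $\int_0^t R(\tau)\,d\tau$ with the maximum extractable energy by time $t$; everything else is bookkeeping and the observation that feasibility constrains cumulative energy, which is exactly what the monotone rearrangement compares against the worst-case profile.
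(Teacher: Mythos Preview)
Your proof is correct and follows essentially the same route as the paper: reduce to a monotone non-increasing signal via Lemma~\ref{lem:commutable refs}, bound its cumulative integral by that of $R(\cdot)$, and conclude by comparing on the super-level interval $[0,T_p)$. The only notable difference is in how the cumulative bound is justified. The paper applies the optimal policy to both $\tilde{P}^r$ and $R$, observes that $R$ is bang-bang so the componentwise energy vectors satisfy $\tilde{E}(t)\geq E^*(t)$, and then takes the scalar sum; you instead argue directly from feasibility that $\int_0^t u_i(\tau)\,d\tau\leq \bar{p}_i\min\{t,x_i\}$ for each device and sum, which sidesteps any reference to a specific policy. Your final chain $\int_0^{T_p}R - pT_p \leq \int_0^{T_p}\max\{R-p,0\} \leq E_R(p)$ also replaces the paper's case split on $t_P$ versus $t_R$ with the pointwise inequality $a\leq\max\{a,0\}$, a minor but pleasant simplification.
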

\begin{proof}
	Firstly, the result of Lemma~\ref{lem:commutable refs}, combined with the fact that $P^r(\cdot)$ is both piecewise constant and feasible, allows us to form a non-increasing equivalent to $P^r(\cdot)$, which we denote $\tilde{P}^r(\cdot)$ and take to be the reference of interest for the remainder of this proof.
	
	We then compose a framework as follows. Consider the partition induced at the initial time by the optimal policy, and let $\mathcal{N}_i,\ i=1,2,...,l$ be the corresponding subsets of devices. Making use of these subsets, then form the worst-case reference of the system, $R(\cdot)$ as in \eqref{eq:worst-case_ref}, so that this can be compared to $\tilde{P}^r(\cdot)$. We note that $R(\cdot)$ is bang-bang in all components of the input (i.e. it runs them at full power from time $t=0$ up to depletion). Denoting by $\tilde{E}(\cdot)$ and $E^*(\cdot)$ the energy vectors resulting from implementation of the optimal policy to meet references $\tilde{P}^r(\cdot)$ and $R(\cdot)$ respectively, we are therefore able to say that $\tilde{E}(t)\geq E^*(t)\ \forall t\geq0$. Moreover, since $\tilde{P}^r(\cdot)$ and $R(\cdot)$ are both feasible,
	\begin{gather}
	1^T\dot{\tilde{E}}(t)=-\tilde{P}^r(t)\ \ \forall t\geq0, \\
	1^T\dot{E}^*(t)=-R(t)\ \ \forall t\geq0.
	\end{gather}
	Now, consider any power level $p$, and define the following:
	\begin{gather}
	t_P\doteq\inf\{t\geq0\colon\tilde{P}^r(t)\leq p\}, \\
	t_R\doteq\inf\{t\geq0\colon R(t)\leq p\},
	\end{gather}
	which exist finite. We are then directly able to make the following comparison:
	\begin{equation}
	\begin{aligned}
	E_{\tilde{P}^r}(p)
	&=\int_0^{t_P}[-1^T\dot{\tilde{E}}(t)-p]dt \\
	&=1^T\tilde{E}(0)-1^T\tilde{E}(t_P) - pt_P \\
	&\leq1^TE^*(0)-1^TE^*(t_P) - pt_P \\
	&=\int_0^{t_P}[-1^T\dot{E}^*(t)-p]dt\\
	&\leq\int_0^{t_R}[R(t)-p]dt
	= \Omega_{\bar{p},x}(p),
	\end{aligned}
	\end{equation}
	where the final inequality follows from consideration of the cases $t_P< t_R$, $t_P= t_R$ and $t_P> t_R$.
\end{proof}

\begin{lemma}
	\label{lem:cont_2-ref_comp}
	Given two reference signals $P^r(\cdot)$ and $\tilde{P}^r(\cdot)$, if
	\begin{equation}
	P^r(t)\geq \tilde{P}^r(t)\geq 0 \ \ \forall t
	\label{eq:cont_2-ref_comp}
	\end{equation}
	and $P^r(\cdot)$ is feasible, then $\tilde{P}^r(\cdot)$ is also feasible.
\end{lemma}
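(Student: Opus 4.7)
The plan is to prove this by an explicit scaling construction that converts a feasible trajectory for $P^r(\cdot)$ into a feasible trajectory for $\tilde{P}^r(\cdot)$. First I would invoke the feasibility of $P^r(\cdot)$ to obtain witnesses $u(\cdot)$ and $z(\cdot)$ satisfying $1^T u(t) = P^r(t)$, $u(t) \in \mathcal{U}_{\bar{p}}$, $\dot{z}(t) = -P^{-1} u(t)$, $z(0) = x$, and $z(t) \geq 0$ for all $t \geq 0$. The idea is then to rescale $u$ pointwise in time by the ratio $\tilde{P}^r(t)/P^r(t)$, which lies in $[0,1]$ wherever it is defined.

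Specifically, I would set
\begin{equation}
\tilde{u}(t) \doteq \left\{
\begin{array}{@{}ll@{}}
\dfrac{\tilde{P}^r(t)}{P^r(t)}\, u(t), & \text{if}\ P^r(t) > 0, \\
0, & \text{if}\ P^r(t) = 0,
\end{array}\right.
\end{equation}
noting that the second case is consistent because $0 \leq \tilde{P}^r(t) \leq P^r(t) = 0$ forces $\tilde{P}^r(t) = 0$. Componentwise one then has $0 \leq \tilde{u}_i(t) \leq u_i(t) \leq \bar{p}_i$, so $\tilde{u}(t) \in \mathcal{U}_{\bar{p}}$, and summing gives $1^T \tilde{u}(t) = \tilde{P}^r(t)$ by construction. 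Defining $\tilde{z}(\cdot)$ by $\dot{\tilde{z}}(t) = -P^{-1}\tilde{u}(t)$ with $\tilde{z}(0) = x$, the bound $\tilde{u}_i(t) \leq u_i(t)$ yields
\begin{equation}
\tilde{z}_i(t) = x_i - \int_0^t \frac{\tilde{u}_i(\tau)}{\bar{p}_i}\, d\tau \geq x_i - \int_0^t \frac{u_i(\tau)}{\bar{p}_i}\, d\tau = z_i(t) \geq 0,
\end{equation}
so the state constraint is preserved. All three conditions of Definition~\ref{def:feasible} are thereby satisfied for $\tilde{P}^r(\cdot)$.

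I do not expect any serious obstacle here; the argument is essentially a one-line monotonicity observation dressed up through an explicit scaling. The only point that requires a little care is handling the set where $P^r(t) = 0$ so that the ratio is well defined, which is resolved by the piecewise definition above. Measurability of $\tilde{u}$ follows because the scaling factor is a measurable function of $t$ (being the ratio of two non-negative measurable signals, extended by $0$ on the null-ratio set), so no further regularity issues arise.
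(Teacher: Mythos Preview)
Your proposal is correct and follows essentially the same approach as the paper: the paper also defines $\tilde{u}(t)=\frac{\tilde{P}^r(t)}{P^r(t)}u(t)$ when $P^r(t)>0$ and $0$ otherwise, then uses $0\leq\tilde{u}(t)\leq u(t)$ to deduce the remaining feasibility conditions. Your version is in fact slightly more explicit, spelling out the state-constraint inequality via integration and noting the measurability of the scaling factor, whereas the paper simply asserts that the other conditions follow.
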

\begin{proof}
	Since $P^r(\cdot)$ is feasible, $\exists u(\cdot)$ induced by this reference which satisfies all necessary conditions. Consider now constructing an input $\tilde{u}(\cdot)$ simply as
	\begin{equation}
	\tilde{u}(t)=\left\{
	\begin{array}{@{}ll@{}}
	\frac{\tilde{P}^r(t)}{P^r(t)}u(t), & \text{if}\ P^r(t)>0 \\
	0, & \text{otherwise}.
	\end{array}\right.
	\end{equation}
	Noting firstly that \eqref{eq:cont_2-ref_comp} leads to $P^r(t)=0\implies\tilde{P}^r(t)=0$, we are able to see by construction that $1^T\tilde{u}(t)=\tilde{P}^r(t)\ \ \forall t$. In addition the feasibility of $P^r(\cdot)$ combined with $0\leq\tilde{u}(t)\leq u(t)\ \forall t$ leads to the result that $\tilde{u}(\cdot)$ must meet all the other necessary conditions for feasibility as in Definition~\ref{def:feasible}.
\end{proof}

\begin{lemma}
	\label{lem:feasible implies E_p}
	If a \textbf{piecewise continuous} signal $P^r(\cdot)$ is feasible, its $E$-$p$ transform is dominated by the capacity of the system, i.e. $P^r(\cdot)\in\mathcal{F}_{\bar{p},x}\implies E_{P^r}(p)\leq \Omega_{\bar{p},x}(p)\ \forall p$.
\end{lemma}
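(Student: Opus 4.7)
The plan is to extend Lemma~\ref{lem:E-p piecewise const} from piecewise constant references to piecewise continuous ones via a standard approximation-from-below argument. I would first observe that any feasible $P^r(\cdot)$ is integrable on $[0,\infty)$: integrating $\dot z = -P^{-1}u$ componentwise and using $z(t)\geq 0$ gives $\int_0^\infty u_i(t)\,dt \leq \bar p_i x_i$, and summing yields $\int_0^\infty P^r(t)\,dt \leq \sum_i \bar p_i x_i < \infty$. This also guarantees that $\max\{P^r(t)-p,0\}$ is integrable for every $p$, so $E_{P^r}(p)$ is well-defined and finite.

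The core step constructs a sequence $\{\tilde{P}^r_k\}_{k\in\mathbb{N}}$ of piecewise constant signals approximating $P^r(\cdot)$ from below. Partition $[0,k]$ into $k\cdot 2^k$ subintervals of length $2^{-k}$, set $\tilde{P}^r_k$ equal to the infimum of $P^r$ over each subinterval, and let $\tilde{P}^r_k$ vanish outside $[0,k]$. By construction $0 \leq \tilde{P}^r_k(t) \leq P^r(t)$ for all $t$. The nested dyadic partitions force $\tilde{P}^r_k(t)$ to be non-decreasing in $k$ at every fixed $t$, and $\tilde{P}^r_k(t)\uparrow P^r(t)$ at every continuity point of $P^r(\cdot)$, which is a co-null set since the ``piecewise continuous'' hypothesis admits only countably many jump discontinuities. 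Lemma~\ref{lem:cont_2-ref_comp} then shows that each $\tilde{P}^r_k(\cdot)$ is feasible, and Lemma~\ref{lem:E-p piecewise const} gives
\begin{equation}
E_{\tilde{P}^r_k}(p) \leq \Omega_{\bar p, x}(p) \qquad \forall p,\ \forall k.
\end{equation}
Sending $k\to\infty$, the monotone convergence theorem applied to the pointwise monotone sequence $\max\{\tilde{P}^r_k(\cdot)-p,0\}\uparrow \max\{P^r(\cdot)-p,0\}$ yields $E_{\tilde{P}^r_k}(p)\to E_{P^r}(p)$, so $E_{P^r}(p)\leq \Omega_{\bar p, x}(p)$ for every $p$, as required.

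The main obstacle is mild and purely technical: verifying that infima over refining dyadic partitions really do converge up to $P^r(t)$ at each continuity point. This follows directly from the $\varepsilon$-$\delta$ definition of continuity once one handles the bookkeeping of which subinterval contains $t$ at scale $2^{-k}$, but it is the one place where the precise interpretation of ``piecewise continuous'' matters. Beyond that, the argument is routine and the overall shape mirrors the classical approximation of an integrable function by simple functions from below.
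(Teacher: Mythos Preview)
Your proof is correct and follows essentially the same approach as the paper: approximate $P^r$ from below by piecewise constant signals, invoke Lemma~\ref{lem:cont_2-ref_comp} for their feasibility and Lemma~\ref{lem:E-p piecewise const} for the capacity bound, then pass to the limit. The only minor difference is that the paper asserts sup-norm convergence of the approximants whereas you use monotone a.e.\ convergence together with the monotone convergence theorem, which is arguably the more careful route.
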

\begin{proof}
	Given $P^r(\cdot)$, consider constructing $k$ piecewise constant signals, $\tilde{P}^{r,i}(\cdot),\ i=1,...,k$, the sum of which approximates from below the original signal, i.e.
	\begin{subequations}
	\begin{gather}
	P^{r,k}(t)\doteq\sum_{i=1}^k\tilde{P}^{r,i}(t)\leq P^r(t)\ \ \forall t,\\
	\lim_{k\to+\infty}\bigg\lVert\sum_{i=1}^k\tilde{P}^{r,i}-P^r(t)\bigg\rVert_\infty=0.
	\end{gather}
	\end{subequations}
	We know from the result of Lemma~\ref{lem:cont_2-ref_comp} that $P^{r,k}(\cdot)$ is feasible, and in particular that $E_{P^{r,k}}(p)\leq\Omega_{\bar{p},x}(p)\ \forall p,k$. Letting $k\to\infty$, we then see that $E_{P^r}(p)=\lim_{k\to\infty}E_{P^{r,k}}(p)\leq\Omega_{\bar{p},x}(p)\ \forall p$.
\end{proof}

\begin{lemma}
	\label{lem:E_p implies feasible}
	If the $E$-$p$ transform of a reference signal $P^r(\cdot)$ is dominated by the capacity of the system, the signal is feasible, i.e.
	\begin{equation}
	E_{P^r}(p)\leq \Omega_{\bar{p},x}(p)\ \ \forall p\implies P^r(\cdot)\in\mathcal{F}_{\bar{p},x}.
	\label{eq:E_p_other_way}
	\end{equation}
\end{lemma}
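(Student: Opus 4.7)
The plan is to prove the implication by contradiction: assuming $P^r$ is infeasible while $E_{P^r}(p) \leq \Omega_{\bar{p},x}(p)$ for all $p$, I would exhibit a particular $p^*$ at which the inequality is violated. First I would reduce to the case that $P^r$ is non-increasing: Lemma~\ref{lem:commutable refs}, extended to piecewise continuous signals via the piecewise constant approximation of Lemma~\ref{lem:feasible implies E_p}, shows that feasibility is invariant under time rearrangement, and the $E$-$p$ transform depends only on the level-set distribution of $P^r$ and so is rearrangement invariant. Replacing $P^r$ by its decreasing rearrangement therefore preserves both the hypothesis and the infeasibility.

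With $P^r$ non-increasing and infeasible, the optimal policy~\eqref{eq: explicit policy} (which by \cite{Evans2017} is optimal in the set-theoretic sense) must itself fail at some first time $t^*$. Let $D = \{i : x_i(t^*) = 0\} \neq \emptyset$ be the set of devices depleted by $t^*$ and set $p^* = \sum_{i \notin D} \bar{p}_i$, the residual power capacity, so that $P^r(t^*) > p^*$. I would then establish two matching identities at the level $p^*$. First, by energy balance, the above-$p^*$ energy consumed by $P^r$ up to $t^*$ equals the total initial energy of $D$,
\begin{equation*}
\int_0^{t^*}\max\{P^r(t) - p^*,\,0\}\, dt \;=\; \sum_{i \in D} \bar{p}_i\, x_i(0),
\end{equation*}
because under OP the portion of demand up to $p^*$ is supplied at full rate by $\bar{D}$ while any excess is drawn exclusively from $D$, which is fully depleted precisely at $t^*$. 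Second, a direct computation of the worst-case reference at the same level yields $\Omega_{\bar{p},x}(p^*) = \sum_{i \in D} \bar{p}_i\, x_i(0)$. Since $P^r$ is non-increasing with $P^r(t^*) > p^*$, the demand continues to exceed $p^*$ on a set of positive measure past $t^*$, giving $E_{P^r}(p^*) > \sum_{i \in D} \bar{p}_i\, x_i(0) = \Omega_{\bar{p},x}(p^*)$, the desired contradiction.

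The main obstacle is the second identity, which rests on a \emph{clean separation} property of OP applied to non-increasing $P^r$: namely, $\max_{i \in D} x_i(0) \leq \min_{i \in \bar{D}} x_i(0)$. Granted this, the above-$p^*$ portion of $R$ is generated exactly by the devices in $D$ depleting in ascending-$x$ order under the worst-case all-at-max schedule, producing the stated value of $\Omega_{\bar{p},x}(p^*)$. Establishing the clean separation requires induction over the subset merger dynamics of OP: the top-$x$ subsets run at full rate (rate $1$ in time-to-go) while lower ones are fractional or off, so a low-$x$ subset can deplete ahead of a high-$x$ one only after first merging into it (at which point the merged subset's devices share a common fate) or by depleting while fractional --- and both possibilities leave $D$ strictly below $\bar{D}$ in initial time-to-go throughout. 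The energy-balance identity then follows immediately from this same structural description of which subsets supply which portion of the demand.
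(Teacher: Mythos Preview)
Your contrapositive route is sound in its essentials and is genuinely different from the paper's argument. The paper proceeds directly: it applies the optimal policy (extended to tolerate negative time-to-go), fixes the power level $s$ equal to the total rating of all devices except those in the lowest-time-to-go subset at $t=\infty$, and then computes via the chain rule and a six-case table that $\tfrac{d}{dt}\Omega\big(s,X(t)\big)=-\max\{P^r(t)-s,0\}$ along the closed-loop trajectory. Integrating gives $\Omega\big(s,X(\infty)\big)=\Omega\big(s,X(0)\big)-E_{P^r}(s)\ge 0$ by hypothesis, and since $\Omega\big(s,X(\infty)\big)$ equals the smallest final time-to-go times a positive power, non-negativity of the state follows. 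Your failure-time argument reaches the same energy identity more transparently: clean separation plus the observation that the excess demand above $p^*$ is supplied exactly by $D$ give both $\int_0^{t^*}(P^r-p^*)^+\,dt$ and $\Omega_{\bar p,x}(p^*)$ as $\sum_{i\in D}\bar p_ix_i(0)$ in one line each, and the binding level $p^*$ is identified structurally rather than through an a~posteriori final-state construction. The paper's approach, on the other hand, never needs to define a ``failure time'' and handles all references uniformly by simply running OP into (potentially) negative states.

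Two remarks. First, the rearrangement to non-increasing references is unnecessary, and its extension past piecewise constant signals is delicate. Clean separation is an immediate consequence of order-preservation under OP for \emph{any} reference: if $x_i(t^*)>0=x_j(t^*)$ then $i$ and $j$ were never in the same subset on $[0,t^*]$, hence $x_i(0)>x_j(0)$; the energy-balance identity likewise requires only that every $\bar D$ device sit strictly above every $D$ device throughout $[0,t^*]$, which this gives. Dropping the reduction also avoids the near-circularity of invoking rearrangement-invariance of feasibility while proving a characterisation of feasibility. Second, your strict-inequality step is not quite right as written: ``$P^r$ non-increasing with $P^r(t^*)>p^*$'' does not by itself force $P^r>p^*$ on positive measure \emph{past} $t^*$ (a downward jump at $t^*$ could land at or below $p^*$). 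The cleanest repair is to assume right-continuity, or to observe that if $P^r\le p^*$ a.e.\ on $(t^*,\infty)$ then OP restricted to $\bar D$ continues past $t^*$, pushing the first genuine failure strictly later and allowing the argument to be rerun there.
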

\begin{proof}
	For a reference signal to be feasible there must exist an input that meets all power and energy constraints as in Definition~\ref{def:feasible}; to prove the claim it is sufficient to find such an input signal. We apply the optimal feedback \eqref{eq: explicit policy} and construct accordingly our chosen input, which meets all power constraints by construction. All that remains is to show that it does not result in negative energy stored in any device, which we do as follows.
	
	Consider the partition induced by the optimal policy at a state $x=x(t)$. Let $\mathcal{N}_{1}(x),\mathcal{N}_{2}(x),...,\mathcal{N}_{q(x)}(x)$, in which $q(x)$ is the number of unique values in $x$, be the corresponding subsets ordered by decreasing time-to-go. Additionally denote the maximum power vectors corresponding to each subset as $\bar{U}_{1}(x),\bar{U}_{2}(x),...,\bar{U}_{q(x)}(x)$. Now, consider the power level $s$ defined according to the final state as follows:
	\begin{equation}
	s\doteq\sum_{i=1}^{q(x(\infty))-1}1^T\bar{U}_{i}(x(\infty)).
	\end{equation}
	This power level corresponds to a summation across all devices except members of the subset with (equal) smallest time-to-go value at the final time. We choose this level because the last subset of devices would be the first to be pushed negative under the optimal policy; if this subset has non-negative time-to-go then all devices must have non-negative time-to-go.
	
	Now, we know that the subsets of devices are monotonically expanding in time. Hence the final subsets are well-defined and in addition are composed of subsets formed at any previous time instant. Thus we know that the power level $s$ corresponds to a summation across one or more subsets at any time along the considered solution. We define
	\begin{equation}
	l\colon x(t)\to k\colon \sum_{i=1}^{k}1^T\bar{U}_{i}(x(t))=s.
	\end{equation}
	Consider now the worst-case reference from an arbitrary starting state $x$.	We are able to write this and the corresponding capacity explicitly as a function of the condensed state vector $X$ as follows:
	\begin{subequations}
		\begin{gather}
		R(t,X)=\sum_{j=1}^{q(x)}\bigg[H(t)-H(t-x_{\mathcal{N}_{j}})\bigg]1^T\bar{U}_{j}(x),\\
		\Omega(p,X)=\int_0^\infty\max\{R(t,X)-p,0\}dt,
		\end{gather}
	\end{subequations}
	in which $H(\cdot)$ denotes the Heaviside step function. This expression is only valid for $x$ non-negative, however we extend it to arbitrary $x$ as follows. We invert the correspondence of the worst-case reference to give time as a function of power:
	\begin{equation}
	t(p,X)=\left\{
	\begin{array}{@{}ll@{}}
	\infty, & \text{if}\ p=0 \\
	x_{\mathcal{N}_1}, & \text{if}\ 0<p\leq 1^T\bar{U}_1(x) \\
	x_{\mathcal{N}_2}, & \text{if}\ 1^T\bar{U}_1(x)< p\leq \sum_{i=1}^21^T\bar{U}_i(x) \\
	\vdots \\
	x_{\mathcal{N}_{q(x)}}, & \text{if}\ \sum_{i=1}^{q(x)-1}1^T\bar{U}_i(x)< p\leq p_{max} \\
	0, & \text{otherwise},
	\end{array}\right.
	\end{equation}
	in which $p_{max}\doteq \sum_{i=1}^{q(x)}1^T\bar{U}_i(x)=\sum_{j=1}^n\bar{p}_j$, and we then redefine the capacity as an integration along the $p$-axis:
	\begin{equation}
	\Omega(p,X)\doteq\int_p^{p_{max}}t(p',X)dp'.
	\end{equation}
	
	\noindent Given this generalised definition of the capacity, consider taking partial derivatives with respect to each distinct state, evaluated at the power level $s$ (recall that this is independent of the current state). This leads to
	\begin{equation}
	\frac{\partial\Omega}{\partial x_{\mathcal{N}_{i}}}\bigg\vert_{s}=\left\{
	\begin{array}{@{}ll@{}}
	1^T\bar{U}_{i}(x), & \text{if}\ i>l(x) \\
	0, & \text{otherwise}.
	\end{array}\right.
	\end{equation}
	Now, denoting by $m(x)$ the highest group index of devices running at positive power, we know that the dynamics corresponding to the optimal policy \eqref{eq: explicit policy} are
	\begin{subequations}
	\begin{equation}
	\dot{x}_{\mathcal{N}_{i}}=\left\{
	\begin{array}{@{}ll@{}}
	-1, & \text{if}\ i<m(x) \\
	-r_i, & \text{if}\ i=m(x) \\
	0, & \text{if}\ i>m(x),
	\end{array}\right.
	\end{equation}	
	in which
	\begin{equation}
	r_i=\frac{P^r-\sum_{j=1}^{i-1}1^T\bar{U}_{j}(x)}{1^T\bar{U}_{i}(x)}.
	\label{eq:frac_of_full}
	\end{equation}
	\end{subequations}
	We are then able to consider the 6 possible cases that arise for the following product for each subset $\mathcal{N}_{i}$ and at the power level $s$:
	\begin{equation}
	\dot{x}_{\mathcal{N}_{i}}\cdot\frac{\partial\Omega}{\partial x_{\mathcal{N}_{i}}}\bigg\vert_{s},
	\label{eq:partial_diff_prod}
	\end{equation}
	as seen in Table~\ref{tab:6cases}.
	\renewcommand{\arraystretch}{1.3}
	\begin{table}[h]
		\centering
		\begin{tabular}{|r@{}|c@{}c|}
				\hline
				& $i\leq l(x)$ & $i>l(x)$ \\
				\hline
				$i<m(x)$ & 0 & $-1^T\bar{U}_{i}(x)$ \\
				$i=m(x)$ & 0 & $-r_i1^T\bar{U}_{i}(x)$ \\
				$i>m(x)$  & 0 & 0 \\
				\hline
		\end{tabular}
		\caption{The 6 possible cases for the expression of \eqref{eq:partial_diff_prod}.}
		\label{tab:6cases}
	\end{table}
	\renewcommand{\arraystretch}{1}
	
	\noindent As it can be seen from the table, there are only 2 cases in which the expression of \eqref{eq:partial_diff_prod} is non-zero: when $l(x)<i\leq m(x)$. These can be converted back to power conditions as follows:
	\begin{equation}
	\begin{aligned}
	&\dot{x}_{\mathcal{N}_{i}}\cdot\frac{\partial\Omega}{\partial x_{\mathcal{N}_{i}}}\bigg\vert_{s}\\
	&\hspace{5mm}=\left\{
	\begin{array}{@{}ll@{}}
	-1^T\bar{U}_i(x), & \text{if}\ s<\sum_{j=1}^{i}1^T\bar{U}_j(x)\leq P^r \\
	\sum_{j=1}^{i-1}1^T\bar{U}_j(x)-P^r, & \text{if}\ s< P^r<\sum_{j=1}^{i}1^T\bar{U}_j(x) \\
	0, & \text{otherwise}.
	\end{array}\right.
	\end{aligned}
	\end{equation}
	Utilising this expression, we can then use the chain rule to form the partial derivative with respect to time of the capacity, evaluated at power level $s$ and state $x$, by summing across subsets. This operation is valid because each subset of devices can in effect be aggregated into a single virtual device, and leads to
	\begin{equation}
	\begin{aligned}
	&\frac{\partial\Omega}{\partial t}\bigg\vert_{s,x} =
	\sum_{i=1}^{q(x)}\dot{x}_{\mathcal{N}_{i}}\cdot\frac{\partial\Omega}{\partial x_{\mathcal{N}_{i}}}\bigg\vert_{s} \\
	&\hspace{2mm} \overset{(\dagger)}{=}-\sum_{i=l(x)+1}^{q(x)}\max\bigg\{0,\min\bigg\{P^r-\sum_{j=1}^{i-1}1^T\bar{U}_j(x),	1^T\bar{U}_i(x)\bigg\}\bigg\} \\
	&\hspace{2mm}=\sum_{i=1}^{l(x)}\max\bigg\{0,\min\bigg\{P^r-\sum_{j=1}^{i-1}1^T\bar{U}_j(x),	1^T\bar{U}_i(x)\bigg\}\bigg\}\\
	&\hspace{2mm}\hphantom{=}\ -\sum_{i=1}^{q(x)}\max\bigg\{0,\min\bigg\{P^r-\sum_{j=1}^{i-1}1^T\bar{U}_j(x),	1^T\bar{U}_i(x)\bigg\}\bigg\} \\
	&\hspace{2mm}\overset{(\ddagger)}{=} \min\big\{P^r,s\big\}-P^r
	=-\max\big\{P^r-s,0\big\},
	\end{aligned}
	\label{eq:dOmega/dt}
	\end{equation}	
	in which the equality $(\dagger)$ results from the removal of the cases for which $l(x)\geq i$ leading to a product of 0, and the equality $(\ddagger)$ results from the following arguments. The negative summation can be interpreted as the negative of the aggregate power across all devices, which must equal $P^r$ as a result of the construction of the input according to the optimal policy. The positive summation is the same series curtailed at the index corresponding to the level $s$; if $P^r>s$ then all devices up to this level will be run at full power and their aggregate will equal $s$, otherwise the aggregate will once again be the total power $P^r$. Integration of \eqref{eq:dOmega/dt} gives
	\begin{equation}
	\Omega\big(s,x(\infty)\big)=\Omega\big(s,x(0)\big)-\int_0^\infty\max\big\{P^r(t)-s,0\big\}dt,
	\end{equation}
	which coupled with the condition \eqref{eq:E_p_other_way} leads directly to $\Omega\big(s,x(\infty)\big)\geq0$.	Moreover,
	\begin{equation}
	\Omega\big(s,x(\infty)\big)=x_{\mathcal{N}_{q(x(\infty))}}(\infty)\cdot 1^T\bar{U}_{q(x(\infty))}\big(x(\infty)\big),
	\end{equation}
	therefore $x_{\mathcal{N}_{q(x(\infty))}}(\infty)\geq0$. As the time-to-go values cannot increase over time the reference must therefore be feasible.
\end{proof}
\noindent Theorem~\ref{the:E_p} then follows directly from Lemmas~\ref{lem:feasible implies E_p} and \ref{lem:E_p implies feasible}.
\qed

\bibliographystyle{IEEEtran}
\bibliography{bib_new2,biblio_manual2}
\end{document}